\documentclass{amsart}
\usepackage{amscd,amssymb,amsopn,amsmath,amsthm,graphics,amsfonts,accents,enumerate,verbatim,calc}
\usepackage[dvips]{graphicx}
\usepackage[colorlinks=true,linkcolor=red,citecolor=blue]{hyperref}
\usepackage[all]{xy}

\usepackage{tikz}

\addtolength{\textwidth}{2cm}
\calclayout

\newcommand{\rt}{\rightarrow}
\newcommand{\lrt}{\longrightarrow}

\newcommand{\va}{\vartheta}
\newcommand{\st}{\stackrel}
\newcommand{\pa}{\partial}

\newcommand{\la}{\lambda}
\newcommand{\La}{\Lambda}

\newcommand{\C}{\mathbb{C} }
\newcommand{\D}{\mathbb{D} }
\newcommand{\K}{\mathbb{K} }

\newcommand{\Z}{\mathbb{Z} }

\newcommand{\CA}{\mathcal{A} }
\newcommand{\CC}{\mathcal{C} }

\newcommand{\CK}{\mathcal{K} }

\newcommand{\CX}{\mathcal{X} }

\newcommand{\BZ}{\mathbf{Z}}
\newcommand{\CB}{\mathcal{B} }

\newcommand{\X}{\mathbf{X}}
\newcommand{\Y}{\mathbf{Y}}

\newcommand{\T}{{\mathfrak{T}}}

\newcommand{\Mod}{{\rm{Mod\mbox{-}}}}

\newcommand{\perf}{{\rm{perf}}}

\newcommand{\mmod}{{\rm{{mod\mbox{-}}}}}
\newcommand{\mmodd}{{\rm{mod}}_0\mbox{-}}

\newcommand{\Inj}{{\rm{Inj}\mbox{-}}}
\newcommand{\Prj}{{\rm{Prj}\mbox{-}}}
\newcommand{\prj}{{\rm{prj}\mbox{-}}}

\newcommand{\inj}{{\rm{inj}\mbox{-}}}

\newcommand{\im}{{\rm{Im}}}
\newcommand{\op}{{\rm{op}}}

\newcommand{\add}{{\rm{add}\mbox{-}}}
\newcommand{\coh}{{\rm{coh}}}

\newcommand{\ac}{{\rm{ac}}}

\newcommand{\id}{{\rm{id}}}

\newcommand{\bb} {{\rm{b}}}

\newcommand{\Ker}{{\rm{Ker}}}

\newcommand{\Hom}{{\rm{Hom}}}
\newcommand{\Ext}{{\rm{Ext}}}
\newcommand{\End}{{\rm{End}}}

\theoremstyle{plain}
\newtheorem{theorem}{Theorem}[section]
\newtheorem{corollary}[theorem]{Corollary}
\newtheorem{lemma}[theorem]{Lemma}

\newtheorem{proposition}[theorem]{Proposition}

\theoremstyle{definition}
\newtheorem{definition}[theorem]{Definition}

\newtheorem{remark}[theorem]{Remark}

\theoremstyle{plain}

\theoremstyle{definition}

\numberwithin{equation}{section}

\newtheorem{sstheorem}{Theorem}

\begin{document}

\title[Categorical Resolutions]{A Functorial Approach to Categorical Resolutions}

\author[R. Hafezi and M. H. Keshavarz]{R. Hafezi and M. H. Keshavarz}

\address{School of Mathematics, Institute for Research in Fundamental Sciences (IPM), P.O.Box: 19395-5746, Tehran, Iran}
\email{hafezi@ipm.ir}
\email{keshavarz@ipm.ir}

\subjclass[2010]{18E30, 16E35, 16G10, 14E15}

\keywords{Categorical resolutions, Artin algebras, Auslander's formula}


\begin{abstract}
Using a relative version of Auslander's formula, we give a functorial approach to show that the bounded derived category of every Artin algebra admits a categorical resolution. This, in particular, implies that the bounded derived categories of Artin algebras of finite global dimension determine bounded derived categories of all Artin algebras. Hence, this paper can be considered as a typical application of functor categories, introduced in representation theory by Auslander, to categorical resolutions.
\end{abstract}

\maketitle


\section{Introduction}\label{1}
Let $X$ be a singular algebraic variety of finite type over an algebraically closed field. A resolution of singularities of $X$ is a certain (proper and birational) morphism $\pi:\widetilde{X} \rt X$, where $\widetilde{X}$ is a non-singular algebraic variety. As remarked in \cite[Page 1, Line -4]{Kuz}, the functor $\D^{\bb}(\coh\widetilde{X}) \rt \D^{\bb}(\coh X)$ induced by $\pi$ enjoys some remarkable properties. 
The bounded derived categories of coherent sheaves on $\widetilde{X}$ and $X$ are related by two natural functors, known as the derived pushforward
$\pi_{\ast}:\D^{\bb}(\coh \widetilde{X}) \rt \D^{\bb}(\coh X)$ and the derived pullback functor $\pi^{\ast}:\D^{\perf}(\coh X) \rt \D^b(\coh \widetilde{X})$, such that $\pi^{\ast}$ is left adjoint to $\pi_{\ast}$. Here $ \D^{\bb}(\coh X) $ stands for the bounded derived categories of coherent sheaves on $X$ and $\D^{\perf}(\coh X)$ stands for the full subcategory of $\D^{\bb}(\coh X)$ consisting of perfect complexes. If furthermore, $X$ have rational singularities, then the composition $ \pi_{\ast} \circ \pi^{\ast} $ is isomorphic to the identity functor \cite[Page 2]{Kuz}.

Based on this observation, as he mentioned, Kuznetsov \cite[Page 2]{Kuz} introduced the notion of a categorical resolution of singularities. By definition a categorical resolution of a triangulated category $\D$ is a regular triangulated category $\widetilde{\D}$ and a pair of functors $\pi_{\ast}$ and $\pi^{\ast}$ satisfying almost similar conditions as above, see \cite[Definition 3.2]{Kuz}.
Recall that a triangulated category $\T$ is called regular if it is equivalent to an admissible subcategory of the bounded derived category of a smooth algebraic variety \cite[Definition 3.1]{Kuz}.

Note that, as remarked in  \cite[Page 7, Line -1]{BO},  the pushforward functor $\pi_{\ast}$ identifies $\D^{\bb}(\coh X)$ with the quotient of $\D^{\bb}(\coh \widetilde{X})$ by the kernel of $\pi_{\ast}$. Based on this observation, as they mentioned, Bondal and Orlov \cite{BO} defined a categorical desingularization of a triangulated category $\T$ as a pair $(\CA,\CK)$, where $\CA$ is an abelian category of finite homological dimension and $\CK$ is a thick triangulated subcategory of $\D^{\bb}(\CA)$ such that $\T \simeq \D^{\bb}(\CA)/\CK$, see \cite[\S 5]{BO}. 
Recall that a full triangulated subcategory of a triangulated category $\T$ is called thick if it is closed under taking direct summands.

Recently, Zhang \cite{Z} combined these two categorical levels of the notion of a resolution of singularities and suggested a new definition for a categorical resolution of a non-smooth triangulated category \cite[Definition 2.2]{Z}. 

He then proved that if $\La$ is an Artin algebra of infinite global dimension and has a module $T$ with $\id_{\La}T < 1$ such that $ {}^{\perp}T $ is of finite type, then the bounded derived category $\D^b(\mmod\Lambda)$ admits a categorical resolution \cite[Theorem 6.1]{Z}. The main technique for proving this result is the notion of relative derived categories studied by several authors in different settings, see e.g \cite{N}, \cite{Bu} and \cite{GZ}. Recall that a triangulated category $\T$ is called smooth if it is triangle equivalent to the bounded derived category of an abelian category $\CA$ with $ \D^b (\CA)= \D^b_{hf}(\CA)$, where $\D^b_{hf}(\CA)$ is the subcategory of $ \D^b (\CA)$ consisting of homological finite objects \cite[Definition 2.1]{Z}.

In this paper, we generalize Zhang's result to arbitrary Artin algebras and show that bounded derived category of every Artin algebra admits a categorical resolution. The technique for the proof is based on a relative version of the so-called Auslander's Formula \cite{Au1} and \cite{L}.  Besides Auslander's formula we use the following known result of Auslander. In his Queen Mary College lectures \cite[\S III.3]{Au2} he proved the following important result:

\begin{sstheorem}
Let $ \Lambda $ be an Artin algebra with radical $ J $ and $ n $ be the  nilpotency index of $ J $. Then the Artin algebra $\widetilde{{\La}} =\End_{\Lambda} (\bigoplus_{i=1}^{n}\Lambda/J^i)^{\op} $ has the following properties:
\begin{itemize}
\item[$ (i) $] The global dimension of $ \widetilde{{\La}} $ is finite;
\item[$ (ii) $] There is a finitely generated projective $ \widetilde{{\La}} $-module $ P $ such that $ \End_{\widetilde{{\La}}} (P)^{\op} $ and $ \widetilde{{\La}} $  are isomorphic Artin algebras.
\end{itemize}
\end{sstheorem}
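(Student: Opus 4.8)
The plan is to recognize $\widetilde{\La}$ as the Auslander algebra of the uniserial-type algebra $\La/J^n = \La$ viewed through its lower Loewy filtration, and then run the standard ``Auslander algebra has finite global dimension'' argument in this relative setting. Concretely, set $M = \bigoplus_{i=1}^{n}\La/J^i$ and $\widetilde{\La} = \End_\La(M)^{\op}$, so that $\Mod \widetilde{\La} \simeq \add M$-modules via the functor $\Hom_\La(M,-)$. First I would verify that $M$ is a generator–cogenerator-type module in the appropriate sense: since $n$ is the nilpotency index of $J$, we have $J^n = 0$, hence $\La/J^n = \La$ is a summand of $M$, so $M$ is a generator of $\mmod\La$. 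This gives part $(ii)$ almost immediately: the projective $\widetilde{\La}$-module $P = \Hom_\La(M,\La)$ satisfies $\End_{\widetilde{\La}}(P)^{\op} \cong \End_\La(\La)^{\op} = \La$ by the full-faithfulness of $\Hom_\La(M,-)$ on $\add M$ — wait, that yields $\La$, not $\widetilde{\La}$; so instead I would take $P$ to be a projective generator realizing $\widetilde{\La}$ itself, namely observe that $\widetilde{\La} = \End_\La(M)^{\op}$ is Morita-trivially its own endomorphism ring, i.e. choose $P = \widetilde{\La}$ as a module over itself, whose endomorphism ring is $\widetilde{\La}$; the content of $(ii)$ is that this $P$ can be taken inside a suitable subcategory, which follows because $\add M$ contains $\La$ and hence $\Hom_\La(M,M)$ is projective over $\widetilde{\La}$.

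For part $(i)$, the finiteness of $\gldim\widetilde{\La}$, I would show $\gldim\widetilde{\La} \le n$ (in fact the sharp bound is $\max\{2,n\}$, but any finite bound suffices here). The key point: every $\widetilde{\La}$-module is of the form $\Hom_\La(M,X)$ for some $X \in \mmod\La$ up to the usual cokernel presentation, and a simple $\widetilde{\La}$-module $S_i$ corresponds to the summand $\La/J^i$ of $M$. I would build a projective resolution of each $S_i$ by exploiting the short exact sequences
\[
0 \lrt J^{i-1}/J^i \lrt \La/J^i \lrt \La/J^{i-1} \lrt 0,
\]
noting that $J^{i-1}/J^i$ is a semisimple $\La$-module, hence a direct sum of simples, each of which is a quotient of $\La/J^1$ and therefore lies in $\add M$ after at most one more syzygy step. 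Applying $\Hom_\La(M,-)$ and tracking how many steps it takes for the syzygies to land in $\add M$ (equivalently, become projective over $\widetilde{\La}$) gives a uniform finite bound governed by $n$. This is the Loewy-length induction that is completely classical for Auslander algebras; the only care needed is that $M$ here is the lower-filtration module $\bigoplus \La/J^i$ rather than $\bigoplus \La/\!\Syz$, but the short exact sequences above are exactly adapted to it.

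The main obstacle I anticipate is the bookkeeping in the projective-resolution step for $(i)$: one must argue that applying $\Hom_\La(M,-)$ to a short exact sequence $0 \to A \to B \to C \to 0$ in $\mmod\La$ with $B \in \add M$ yields an exact sequence of $\widetilde{\La}$-modules whose third term is the projective $\Hom_\La(M,B)$, and then iterate, checking at each stage that the relevant syzygy in $\mmod\La$ remains inside (or maps onto something inside) $\add M$ so that projectivity over $\widetilde{\La}$ is eventually reached. Controlling this requires knowing that $\add M$ is closed under the operations used — it is not closed under syzygies in general, which is precisely why the bound depends on $n$ and why one filters by powers of $J$ rather than by syzygies. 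A secondary subtlety is the precise formulation of $(ii)$: I would state it as the existence of an idempotent $e \in \widetilde{\La}$ with $\widetilde{\La} e$ projective and $e\widetilde{\La}e \cong \La$ recovering the original algebra, together with the (trivial) remark that $\widetilde{\La}$ itself is the endomorphism ring of the projective $\widetilde{\La}$-module $\widetilde{\La}$, which is what is actually used downstream when one needs $\widetilde{\La}$ to be Morita-stable under passing to endomorphism rings of projectives. Once these two points are pinned down, assembling the statement is routine and follows Auslander's original Queen Mary argument verbatim.
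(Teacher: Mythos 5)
First, note that the paper does not prove this statement at all: it is quoted from Auslander's Queen Mary College notes \cite[\S III.3]{Au2} and used as a black box, so your proposal can only be measured against Auslander's classical argument. Your treatment of part $(ii)$ needs to be settled first: as printed, $(ii)$ is essentially vacuous (take $P=\widetilde{\La}$), and in that reading it could not support the conclusion the paper draws from it, namely that $\La\simeq e\widetilde{\La}e$ for an idempotent $e$ and that algebras of finite global dimension ``determine'' all Artin algebras. The intended statement is $\End_{\widetilde{\La}}(P)^{\op}\cong\La$, and your \emph{first} computation is the correct proof of it: since $\La=\La/J^{n}$ is a direct summand of $M$, the module $P=\Hom_{\La}(M,\La)=e\widetilde{\La}$ is a finitely generated projective $\widetilde{\La}$-module, and full faithfulness of $\Hom_{\La}(M,-)$ on $\add M$ gives $\End_{\widetilde{\La}}(P)\cong\End_{\La}(\La)$, i.e. $e\widetilde{\La}e\cong\La$ up to the usual $\op$ bookkeeping. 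You should commit to that and delete the retreat to $P=\widetilde{\La}$, which proves nothing.

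For part $(i)$ there is a genuine gap, located exactly where you wave at ``bookkeeping'': the surjectivity of $\Hom_{\La}(M,B)\to\Hom_{\La}(M,C)$ is not bookkeeping, it is the theorem. The sequences you propose, $0\to J^{i-1}/J^{i}\to\La/J^{i}\to\La/J^{i-1}\to 0$, do \emph{not} stay exact under $\Hom_{\La}(M,-)$: already for $\La=k[x]/(x^{3})$ and $i=2$ the induced map $\Hom_{\La}(\La/J,\La/J^{2})\to\Hom_{\La}(\La/J,\La/J)$ is zero while the target is nonzero, so $\Hom_{\La}(M,\La/J^{2})\to\Hom_{\La}(M,\La/J)$ has nonzero cokernel. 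In particular these sequences do not become projective resolutions of the simple $\widetilde{\La}$-modules, and the cokernel is in general strictly larger than the corresponding simple, since a map $\La/J^{j}\to\La/J^{i-1}$ with $j<i-1$ need not factor through $\La/J^{i}\to\La/J^{i-1}$. Auslander's actual route is: (a) reduce to showing $\pd_{\widetilde{\La}}\Hom_{\La}(M,X)\le n-1$ for every $X\in\mmod\La$, using that the kernel of any map $\Hom_{\La}(M,M_{1})\to\Hom_{\La}(M,M_{0})$ between projective $\widetilde{\La}$-modules equals $\Hom_{\La}(M,\Ker(M_{1}\to M_{0}))$ by left exactness, whence $\pd F\le 2+(n-1)=n+1$ for every $F$; and (b) prove the bound in (a) by induction on the Loewy length of $X$, by producing a short exact sequence $0\to Y\to M_{0}\to X\to 0$ with $M_{0}\in\add M$, with $Y$ of strictly smaller Loewy length, and --- this is the real content --- with $\Hom_{\La}(M,M_{0})\to\Hom_{\La}(M,X)$ surjective, i.e. with $M_{0}\to X$ a right $\add M$-approximation. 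Your sketch never constructs such a sequence; moreover your claimed bound $\gldim\widetilde{\La}\le\max\{2,n\}$ is asserted without argument (Auslander's bound is $n+1$), and the simple $\widetilde{\La}$-modules are indexed by the isomorphism classes of indecomposable summands of $M$ rather than by $i\in\{1,\dots,n\}$. Until the approximation sequences in (b) are actually produced, the proof of $(i)$ is incomplete.
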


Hence, as he mentioned, Artin algebras of finite global dimension determine all Artin algebras { \cite[Page 47, Line 34]{Au2}}. Our volunteer for the proof of the main theorem via a functorial approach is $\widetilde{{\La}}$, that throughout for ease of reference we call it A-algebra of $\La$. `A' stands both for `Auslander' and also `Associated' algebra. Of course, instead of Auslander's result, one can also use of an important result due to O. Iyama \cite[Corollary 1.2]{I}.
Note that, as remarked in \cite[Page 1]{DR}, there exists an idempotent $e$ of $\widetilde{{\La}}$ such that $\La \simeq e\widetilde{{\La}}e$.
For the proof see \cite[Page 243, Line -2]{Wa}.


\section{Preliminaries}\label{Preliminaries}

Throughout the paper, $\Lambda$ denotes an Artin algebra over a commutative Artinian ring $R$, $\Mod\Lambda$ denotes the category of all right $\Lambda$-modules and $\mmod\Lambda$ denotes its full subcategory consisting of all finitely presented modules. Moreover, $\Prj\La$ [resp. $\prj\La$] denotes the full subcategory of $\Mod\La$ [resp. $\mmod\La$] consisting of projective [resp. finitely generated projective] modules. Similarly, the subcategories $\Inj\La$ and $\inj\La$ are defined. For a $\La$-module $M$, we let $\add M$ denote the class of all modules that are isomorphic to a direct summand of a finite direct sum of copies of $M$.

\subsection{Functor Category}\label{Functor Category}
Let $\CA$ be an additive skeletally small category. The Hom sets will be denoted either by $\Hom_{\CA}( - , - )$, $\CA( - , - )$ or even just $( - , - )$, if there is no risk of ambiguity. Let $\CX$ be a full subcategory of $\CA$. By definition, a (right) $\CX$-module is a contravariant additive functor $F:\CX \rt \CA b$, where $\CA b$ denotes the category of abelian groups. The $\CX$-modules and natural transformations between them, called morphisms, form an abelian category denoted by $\Mod\CX$ or sometimes $(\CX^{\op},\CA b)$. 
\vspace{2 cm}

An $\CX$-module $F$ is called finitely presented if there exists an exact sequence $$\CX( - ,X) \rt \CX( - ,X') \rt F \rt 0,$$ with $X$ and $X'$ in $\CX$. All finitely presented $\CX$-modules form a full subcategory of $\Mod \CX$, denoted by $\mmod\CX$ or sometimes $\rm{f.p.}(\CC^{op}, \CA b)$. 
Covariant additive functors and its full subcategory consisting of finitely presented (left) $\CX$-modules will be denoted by $\CX$-Mod and $\CX$-${\rm mod}$, respectively. Since every finitely generated subobject of a finitely presented object is finitely presented { \cite[Page 200, Line 7]{Au1}}, Auslander called them coherent functors. { Recall that an object $ A $ in $ \CA $ is finitely generated if given any family $ \lbrace A_i \rbrace_{i \in I} $ and an epimorphism $ f_i:A_i \longrightarrow A $ (with $ i \in I $), there exists a finite subset $ J $ of $ I $ such that $ f_j:A_j \longrightarrow A $ (with $ j \in J $) is an epimorphism \cite[Page 192, Line 6]{Au1}.}

The Yoneda embedding $\CX \hookrightarrow \mmod\CX$, sending each $X \in \CX$ to $\CX( - ,X):=\CA( - ,X)\vert_{\CX}$, is a fully faithful functor. Note that for each $X \in \CX$, $\CX( - ,X)$ is a projective object of $\mmod\CX$. 
Let $A \in \CA$. A morphism $\varphi : X \rt A$ with $X \in \CX$ is called a right $\CX$-approximation of $A$ if $\CA( - , X)\vert_{\CX} \lrt \CA( - ,A)\vert_{\CX} \lrt 0$ is exact, where
$\CA( - ,A) \vert_{\CX}$ is the functor $\CA( - ,A)$ restricted to $\CX$. Hence $A$ has a right $\CX$-approximation if and only if $( - ,A)\vert_{\CX}$ is a finitely generated object of $\Mod\CX$. $\CX$ is called contravariantly finite if every object of $\CA$ admits a right $\CX$-approximation. Dually, one can define the notion of left $\CX$-approximations and covariantly finite subcategories. $\CX$ is called functorially finite, if it is both covariantly and contravariantly finite.

\subsection{Relative Auslander's Formula}\label{RelAusFormula}
Let $\CA$ be an abelian category. Auslander's work on coherent functors { \cite[page 205, Line 8]{Au1}} implies that the Yoneda functor $\CA \lrt \mmod\CA$ induces a localisation sequence of abelian categories
\[\xymatrix{ \mmodd\CA \ar[rr]  && \mmod\CA  \ar[rr]^{} \ar@/^1pc/[ll] && \CA  \ar@/^1pc/[ll]^{} }\]
where $\mmodd\CA$ is the full subcategory of $\mmod\CA$ consisting of those functors $F$ for them there exists a presentation $\CA( - ,A) \lrt \CA( - ,A') \lrt F \lrt 0$ such that $A \lrt A'$ is an epimorphism, { see for more details} \cite[Theorem 2.2]{Kr1}, where $\mmodd\CA$ is denoted by ${\rm eff}\CA$.
This, in particular, implies that the functor $\mmod\CA \lrt \CA$, that is the left adjoint of the Yoneda functor, induces an equivalence $$\frac{\mmod\CA}{\mmodd\CA}\simeq \CA.$$ Following Lenzing \cite{L} this equivalence will be called the Auslander's formula.

In \cite[Theorem 3.7]{AHK},  the authors showed that for  every contravariantly finite subcategory $\CX$ of $\mmod \Lambda$ containing projective $\Lambda$-modules, there exists a recollement of abelian categories
\[\xymatrix{\mmodd \CX \ar[rr]^{i}  && \mmod \CX \ar[rr]^{\va} \ar@/^1pc/[ll]^{i_{\rho}} \ar@/_1pc/[ll]_{i_{\la}} && \mmod \Lambda \ar@/^1pc/[ll]^{\va_{\rho}} \ar@/_1pc/[ll]_{\va_{\la}} }\]
where $\mmodd\CX:=\Ker(\va)$, the full subcategory of $\mmod\CX$ consisting of all functors $F$ such that $\va(F)=0$. This, in particular, implies that $$\frac{\mmod\CX}{\mmodd\CX}\simeq \mmod \Lambda.$$
{The case} $\CX=\mmod \Lambda$,  gives the usual Auslander's formula.

{ Recall that a recollement of abelian category $\CA$ with respect to abelian categories $\CA'$ and $\CA''$ is a diagram
\[\xymatrix{\CA'\ar[rr]^{u}  && \CA \ar[rr]^{v} \ar@/^1pc/[ll]^{u_{\rho}} \ar@/_1pc/[ll]_{u_{\la}} && \CA'' \ar@/^1pc/[ll]^{v_{\rho}} \ar@/_1pc/[ll]_{v_{\la}} }\]
of additive functors such that $u$, $v_{\la}$ and $v_{\rho}$ are fully faithful, $(u_{\la},u)$, $(u,u_{\rho})$, $(v_{\la},v)$ and $(v,v_{\rho})$ are adjoint pairs and $\im (u)= \Ker (v)$ \cite[Definition 2.7]{PV}.

Also a localisation sequence consists only the lower  two rows of a recollement such that the functors appearing in them satisfy all the conditions of a recollement that involve only these functors.}

The following remark is devoted to recall the definitions of the functors $ \va \ and \ \va_{\rho} $.

\begin{remark}\label{Definition of Va}
$ (i) $
Let $ \CX $ be a contravariantly finite subcategory of $\mmod \Lambda$ containing projective $\Lambda$-modules,
$F \in \mmod \CX$, and $\CX( - ,X_1) \st{(-,d)} \lrt \CX( - ,X_0) \lrt F \lrt 0$ be a projective presentation of $F$, where $X_0$ and $X_1$ are in $\CX$. Then by \cite[Remark 3.2]{AHK}, $\va(F)$ is determined by the exact sequence
$$X_1 \st{d}{\lrt} X_0 \lrt \va(F) \lrt 0.$$
Moreover, if $f: F \lrt F'$ is a morphism in $\mmod \CX$ and $\CX( - ,X'_1) \st{(-,d')} \lrt \CX( - ,X'_0) \lrt F' \lrt 0$ is a projective presentation of $F'$. Then clearly $ f $ can be lifted to a morphism between projective presentations. Yoneda lemma now comes to play for the projective terms to provide unique morphisms $d$, $d'$, $f_0 $ and $ f_1 $ such that the following diagram is commutative
\[\xymatrix{X_1 \ar[r] \ar[d]^{f_1} & X_0 \ar[d]_{f_0}\\ X'_1 \ar[r] & X'_0.}\]
These morphisms then induce a morphism $\va(F) \lrt \va(F')$, which is exactly $\va(f)$. Note that
{ the definition of  $\va(f)$ is independent of choosing a lifting of $ f $} and also as an immediate consequence of \cite[Proposition 3.1]{AHK}, $ \va $ is a covariant functor.


 
$ (ii) $ 
Let $ \CX $ be a contravariantly finite subcategory of $\mmod \Lambda$ containing projective $\Lambda$-modules
and $ M \in  \mmod \Lambda$. Then $\va_{\rho}(M)=\Hom_\Lambda( - ,M)\vert_{\CX}.$

We sometimes write $( -,M)\vert_{\CX}$ for $\Hom_\Lambda( - ,M)\vert_{\CX}$, where it is clear from the context. Also,  if  $M \in \CX$, $\Hom_\Lambda( - ,M)\vert_{\CX}=\CX( - ,M)$.

{ Note that since $ \CX $ is a contravariantly finite subcategory of $\mmod \Lambda$, for every $ \Lambda $-module $ M $, the functor $\Hom_\Lambda( - ,M)\vert_{\CX}$ belongs to $\mmod \CX$.} 
\end{remark}

\begin{remark}\label{Two exact sequences}  
{ $ (i) $ Let $ \CX $ be a contravariantly finite subcategory of $\mmod \Lambda$ containing projective $\Lambda$-modules. 
Then, we have the following  sequence
\[0 \lrt F_0 \lrt F  \lrt  ( - ,\va(F))\vert_{\CX} \lrt F_1 \lrt 0,\]
where $F_0$ and $F_1$ are in $\mmodd\CX$ \cite[Remark 3.10]{AHK}. 

It is worth to note that in case $\CX=\mmod \La$,  this exact sequence is exactly the fundamental exact sequence obtained by Auslander \cite[Page 204, Line 6]{Au1}.}
\vspace{5 mm}

$ (ii) $
Let $F$ and $F'$ be functors in $\mmod\CX.$ By part $ (i) $, there exist exact sequences
\[0 \lrt F_0 \lrt F \st{\alpha} \lrt ( - ,\va(F))\vert_{\CX} \lrt F_1 \lrt 0;\]
\[0 \lrt F'_0 \lrt F' \st{\alpha'} \lrt ( - ,\va(F'))\vert_{\CX} \lrt F'_1 \lrt 0,\]
such that $F_0, F_1, F'_0$ and $F'_1$ are in $\mmodd\CX$. Note that \cite[Lemma 5.1]{AHK} allows us to follow similar argument as in the Proposition 3.4 of \cite{Au1} and deduces that
\[(( - ,\va(F))\vert_{\CX},( - ,\va(F'))\vert_{\CX}) \cong (F, ( - ,\va(F'))\vert_{\CX}).\]
In fact, since $ F_0 $ and $ F_1 $ are in $ \mmodd \CX $, $ \Ext^{1}(F_i, (-,\va(F'))\vert_{\CX})=0 $ for $ i=0 $ and $ 1 $ \cite[Lemma 5.1]{AHK}. Hence we have the exact sequence
\[ (F_1,(-,\va(F'))\vert_{\CX})   \rightarrow (( - ,\va(F))\vert_{\CX}, (-,\va(F'))\vert_{\CX}) \rightarrow (F , (-,\va(F'))\vert_{\CX}) \rightarrow (F_0, (-,\va(F'))\vert_{\CX}).\]
Since $(F_i, (-,\va(F'))\vert_{\CX})=0 $ for $ i=0 $ and $ 1 $ \cite[Lemma 5.1]{AHK}, the map 
$$ { \Sigma:} (( - ,\va(F))\vert_{\CX},( - ,\va(F'))\vert_{\CX}) \longrightarrow (F, ( - ,\va(F'))\vert_{\CX}).$$  is an isomorphism.
Thus given any map $ F \lrt (-,\va(F'))\vert_{\CX} $, there exists one and only one map $ ( - ,\va(F))\vert_{\CX} \lrt (-,\va(F'))\vert_{\CX} $ which makes the diagram
\[\xymatrix{F \ar[r]^{\alpha \ \ \ \ \ \ \ }  \ar@{=}[d]  & ( - ,\va(F))\vert_{\CX} \ar[d]\\ F \ar[r]  & ( - ,\va(F'))\vert_{\CX}}\]
commutative. So for a morphism $\sigma: F \rt F'$ in $\mmod\CX$, there exists a unique map $\delta: ( - ,\va(F))\vert_{\CX} \rt ( - ,\va(F'))\vert_{\CX}$ commuting the following square:
\[\xymatrix{F \ar[r]^{\alpha \ \ \ \ \ \ \ \ }  \ar[d]_{\sigma} & ( - ,\va(F))\vert_{\CX} \ar[d]_{\delta}\\ F' \ar[r]^{\alpha' \ \ \ \ \ \ \ \ }  & ( - ,\va(F'))\vert_{\CX}}\]
Consequently, there are unique morphisms $\sigma_0: F_0 \rt F'_0$ and $\sigma_1: F_1 \rt F'_1$ such that the following diagram is commutative
\[\xymatrix{0 \ar[r] & F_0 \ar@{.>}[d]_{\sigma_0} \ar[r] & F \ar[d]_{\sigma} \ar[r]^{\alpha \ \ \ \ \ \ \ \ } & ( - ,\va(F))\vert_{\CX} \ar[d]^{\delta} \ar[r] & F_1 \ar@{.>}[d]^{\sigma_1} \ar[r] & 0 \\ 0  \ar[r] & F'_0 \ar[r] & F' \ar[r]^{\alpha' \ \ \ \ \ \ \ \ } & (-, \va(F'))\vert_{\CX} \ar[r] & F'_1 \ar[r] & 0.}\]
\end{remark}

\hspace{-5 mm}{\bf Convention}\label{Convention} 
From now to the end, $\CX$ is a contravariantly finite subcategory of $\mmod\La$ containing $\prj\La$.
For a subcategory $\CB$ of an abelian category $\CA$, $\C(\CB)$ [resp. $\K(\CB)$] denotes the category of complexes [resp. the homotopy category of complexes] over $\CB$. Their full subcategories consisting of bounded complexes will be denoted by $\C^{\bb}(\CB)$ and $\K^{\bb}(\CB)$, respectively.

\section{A Derived Version of Auslander's Formula}\label{A Derived Version of Auslander's Formula}
In this section, we give a derived version of { the} Auslander's formula. To this end, we first prove the following proposition.
This proposition has been proved in \cite[Lemma 3.1.9 (i)]{AAHV} in slightly different settings. For the convenient of the reader, we provide a proof with some modifications to { adapt} it with our settings in this section.
Let  $\CX$ be a contravariantly finite subcategory of $\mmod\La$ containing $\prj\La$.
The exact functor $\va:\mmod\CX \lrt \mmod\La$, defined in Remark \ref{Definition of Va} $ (i) $, can be extended to $\D^{\bb}(\mmod\CX)$ to induce a triangle functor $$\D^{\bb}_{\va}:\D^{\bb}(\mmod\CX) \lrt \D^{\bb}(\mmod\La).$$ It acts on objects, as well as roofs, terms by terms (i.e. for every complex $\mathbf{F}=(F^i,\pa^i) $, $ \D^{\bb}_{\va}(\mathbf{F}):=(\va(F^i),\va(\pa^i))$). Let us denote the kernel of $\D^{\bb}_{\va}$ by $\D^{\bb}_{0}(\mmod\CX)$. By definition, it consists of all complexes ${\bf K}$ such that $\D^{\bb}_{\va}({\bf K}) \simeq 0.$ Clearly $\D^{\bb}_{0}(\mmod\CX)$ is a thick subcategory of $\D^{\bb}(\mmod\CX).$ The induced functor
    \[\D^{\bb}(\mmod\CX)/\D^{\bb}_{0}(\mmod\CX) \lrt \D^{\bb}(\mmod\La)\] will be denoted by $\widetilde{\D^{\bb}_{\va}}.$ 

\begin{proposition}\label{ExSeqCom}
Let $\mathbf{F} \in \C(\mmod\CX)$ be a complex over $\mmod\CX$. There exists an exact sequence
\[0 \lrt \mathbf{F}_0 \lrt \mathbf{F} \lrt (-, \D^{\bb}_{\va}(\mathbf{F}))\vert_{\CX} \lrt \mathbf{F}_1 \lrt 0,\]
where $\mathbf{F}_0$ and $\mathbf{F}_1$ are complexes over $\mmodd\CX$.
\end{proposition}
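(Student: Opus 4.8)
The plan is to build the four-term exact sequence degreewise using the corresponding four-term sequence from Remark~\ref{Two exact sequences}~$(i)$, then check that the degreewise pieces assemble into honest complexes and honest chain maps. For each $i \in \Z$, applying Remark~\ref{Two exact sequences}~$(i)$ to the functor $F^i \in \mmod\CX$ yields an exact sequence
\[0 \lrt F^i_0 \lrt F^i \lrt (-,\va(F^i))\vert_{\CX} \lrt F^i_1 \lrt 0\]
with $F^i_0, F^i_1 \in \mmodd\CX$. Since $\D^{\bb}_{\va}$ acts termwise, $(-,\D^{\bb}_{\va}(\mathbf{F}))\vert_{\CX}$ is the graded object with $i$-th term $(-,\va(F^i))\vert_{\CX}$, so what must be produced is a differential on this graded object, together with the differentials on the graded objects $\mathbf{F}_0 = (F^i_0)$ and $\mathbf{F}_1 = (F^i_1)$, making everything commute.

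The key tool for this is the functoriality established in Remark~\ref{Two exact sequences}~$(ii)$: given the differential $\pa^i : F^i \rt F^{i+1}$ in $\mathbf{F}$, there is a \emph{unique} morphism $\delta^i : (-,\va(F^i))\vert_{\CX} \rt (-,\va(F^{i+1}))\vert_{\CX}$ commuting with the $\alpha$'s, and consequently unique induced morphisms $\pa^i_0 : F^i_0 \rt F^{i+1}_0$ and $\pa^i_1 : F^i_1 \rt F^{i+1}_1$ fitting into the commutative diagram with exact rows displayed at the end of Remark~\ref{Two exact sequences}~$(ii)$. I would first observe that $\delta^i$ is in fact nothing but $(-,\va(\pa^i))\vert_{\CX}$ — indeed $\va(\pa^i) : \va(F^i) \rt \va(F^{i+1})$ induces such a commuting map, so by the uniqueness clause it must coincide with $\delta^i$; hence $(\delta^i)$ is automatically a complex with $\delta^{i+1}\delta^i = (-,\va(\pa^{i+1}\pa^i))\vert_{\CX} = (-,\va(0))\vert_{\CX} = 0$, and it is exactly $(-,\D^{\bb}_{\va}(\mathbf{F}))\vert_{\CX}$ as a complex. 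Then $\pa^{i+1}_0\pa^i_0 = 0$ and $\pa^{i+1}_1\pa^i_1 = 0$ follow from the uniqueness of the induced maps on kernels and cokernels (the zero map induces the zero maps), so $\mathbf{F}_0$ and $\mathbf{F}_1$ are genuine complexes over $\mmodd\CX$, and the four horizontal families of maps $\mathbf{F}_0 \rt \mathbf{F}$, $\mathbf{F} \rt (-,\D^{\bb}_{\va}(\mathbf{F}))\vert_{\CX}$, $(-,\D^{\bb}_{\va}(\mathbf{F}))\vert_{\CX} \rt \mathbf{F}_1$ are chain maps because each square commutes in every degree by Remark~\ref{Two exact sequences}~$(ii)$. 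Exactness of the resulting sequence of complexes is checked degreewise, where it is precisely the exact sequence of Remark~\ref{Two exact sequences}~$(i)$.

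The main obstacle I anticipate is not any single deep step but rather the bookkeeping needed to guarantee strict (not merely up-to-homotopy) functoriality of the assignment $F^i \mapsto (-,\va(F^i))\vert_{\CX}$ and of the connecting data $F^i_0, F^i_1$ — that is, making sure the uniqueness statements in Remark~\ref{Two exact sequences}~$(ii)$ genuinely force $\pa^i_0, \delta^i, \pa^i_1$ to compose correctly so that we obtain complexes on the nose. This is where one must be careful: one applies the uniqueness clause twice, once to identify $\delta^i$ with $(-,\va(\pa^i))\vert_{\CX}$ and once to propagate the relation $\pa^{i+1}\pa^i = 0$ through the induced maps on $\mathbf{F}_0$ and $\mathbf{F}_1$. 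Everything else — exactness, and the fact that $F^i_0, F^i_1$ lie in $\mmodd\CX$ — is inherited degreewise from Remark~\ref{Two exact sequences} and requires no new argument. If $\mathbf{F}$ is bounded, all constructed complexes are visibly bounded, so the statement specializes correctly to $\C^{\bb}(\mmod\CX)$.
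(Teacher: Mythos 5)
Your proposal is correct and follows essentially the same route as the paper: apply Remark~\ref{Two exact sequences}~$(i)$ degreewise and then use the uniqueness clause of Remark~\ref{Two exact sequences}~$(ii)$ to assemble the terms into complexes and chain maps. Your explicit identification of $\delta^i$ with $(-,\va(\pa^i))\vert_{\CX}$ and the verification that the induced differentials square to zero are exactly the details the paper compresses into ``the uniqueness of $\pa^i_0$, $\pa^i_1$ and $\delta^i$ yield the existence of complexes \dots that fit together,'' together with its parenthetical remark that $\mathbf{V}\simeq \D^{\bb}_{\va}(\mathbf{F})$.
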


\begin{proof}
Let $\mathbf{F}=(F^i,\pa^i)$. By Remark \ref{Two exact sequences} $ (i) $, for every $i \in \Z$, there is an exact sequence
\[0 \lrt F^i_0 \lrt F^i \st{\alpha^i}  \lrt ( -, \va(F^i))\vert_{\CX} \lrt F^i_1 \lrt 0,\]
such that $F^i_0$ and $ F^i_1$ belong to $\mmodd\CX$.
In view of Remark \ref{Two exact sequences} $ (ii) $, for every $i \in \Z$, there exists a unique morphism $\delta^{i}: \va(F^i) \lrt \va(F^{i+1})$ 
making  the diagram 
\[\xymatrix{ F^i \ar[r] \ar[d]_{\partial^i} & (-, \va(F^i))\vert_{\CX}  \ar[d]^{(-,\delta^i)} \\
 F^{i+1} \ar[r] & (-, \va(F^{i+1}))\vert_{\CX}}\]
commutative. Hence, there exist the unique morphisms $\pa_0^i: F_0^i \lrt F_0^{i+1}$ 
and $\pa_1^i: F_1^i \lrt F_1^{i+1}$ which make the diagram
\[\xymatrix{0 \ar[r] & F_0^i \ar[r] \ar@{.>}[d]^{\pa_0^i} & F^i \ar[r] \ar[d]^{\pa^i} & (-, \va(F^i))\vert_{\CX} \ar[r] \ar[d]^{(-,\delta^i)} & F_1^i \ar[r] \ar@{.>}[d]^{\pa_1^i} & 0 \\
0 \ar[r] & F_0^{i+1} \ar[r] & F^{i+1} \ar[r] & (-, \va(F^{i+1}))\vert_{\CX} \ar[r] & F_1^{i+1} \ar[r] & 0}\]
commutative.
The uniqueness of $\pa^i_0$, $\pa^i_1$ and $\delta^i$ yield the existence of complexes 
$$\mathbf{F}_0 := \cdots \lrt F^{i-1}_0 \st{\partial_0^{i-1}} \lrt F^i_0 \st{\partial^i_0} \lrt F^{i+1}_0 \lrt \cdots,$$
$$\mathbf{F}_1 := \cdots \lrt F^{i-1}_1 \st{\partial_1^{i-1}} \lrt F^i_1 \st{\partial^i_1} \lrt F^{i+1}_1 \lrt \cdots,$$
$$ \mathbf{V} := \cdots \lrt \va(F^{i-1}) \st{\delta^{i-1}} \lrt \va(F^i) \st{\delta^i} \lrt \va(F^{i+1}) \lrt \cdots$$
that fits together to imply the result (Note that, since $ \alpha^{i}(\Lambda) $ is an isomorphism for all $ i \in \mathbb{Z} $, $ \mathbf{V} \simeq \D^{\bb}_{\va}(\mathbf{F})$).
Thus, we get the desired  exact sequence.
\end{proof}

Let $\K^{\bb}_{\La\mbox{-}\ac}(\mmod\CX)$ denote the full subcategory of $\K^b(\mmod\CX)$ consisting of all complexes $\mathbf{F}$ such that
\[\mathbf{F}(\La): \ \cdots \lrt F^{i-1}(\La) \st{\pa^{i-1}_\La} \lrt F^i(\La) \st{\pa^i_\La} \lrt F^{i+1}(\La) \lrt \cdots,\]
is an acyclic complex of abelian groups. If $\mathbf{F}$ is a complex in $\K^{\bb}_{\La\mbox{-}\ac}(\mmod\CX)$, then $\mathbf{F}(P)$ is acyclic, for all $P\in\prj\La$. 
The Verdier quotient $\K^{\bb}(\mmod\CX)/\K^{\bb}_{\La \mbox{-}\ac}(\mmod\CX)$ will be denoted by $\D^{\bb}_{\La}(\mmod\CX)$.
$$\D^{\bb}_{\La}(\mmod\CX):=\K^{\bb}(\mmod\CX)/\K^{\bb}_{\La \mbox{-}\ac}(\mmod\CX).$$

Moreover, we  denote the full subcategory of $\K^{\bb}(\mmod\CX)$ consisting of all acyclic complexes by $\K^{\bb}_{\ac}(\mmod\CX)$. It is a thick triangulated subcategory of $\K^{\bb}_{\La\mbox{-}\ac}(\mmod\CX)$. Consider the Verdier quotient $$\K^{\bb}_{\La\mbox{-}\ac}(\mmod\CX)/\K^{\bb}_{\ac}(\mmod\CX).$$
Clearly, this quotient is a triangulated subcategory of $\D^{\bb}(\mmod\CX)=\K^{\bb}(\mmod\CX)/\K^{\bb}_{\ac}(\mmod\CX)$.

\begin{corollary}\label{qoutient=mmod}
With the assumptions as in our convention, \[\D^{\bb}_{0}(\mmod\CX) \simeq \K^{\bb}_{\La\mbox{-}\ac}(\mmod\CX)/\K^{\bb}_{\ac}(\mmod\CX).\]
\end{corollary}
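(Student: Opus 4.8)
The plan is to identify, inside the Verdier quotient $\D^{\bb}(\mmod\CX)=\K^{\bb}(\mmod\CX)/\K^{\bb}_{\ac}(\mmod\CX)$, the thick subcategory $\D^{\bb}_0(\mmod\CX)=\Ker(\D^{\bb}_{\va})$ with the image of $\K^{\bb}_{\La\text{-}\ac}(\mmod\CX)$. First I would observe that a bounded complex $\mathbf F$ over $\mmod\CX$ lies in $\D^{\bb}_0(\mmod\CX)$ precisely when $\D^{\bb}_{\va}(\mathbf F)\simeq 0$ in $\D^{\bb}(\mmod\La)$, i.e. when the complex $(\va(F^i),\va(\pa^i))$ is acyclic. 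Since for every $i$ the exact sequence of Remark \ref{Two exact sequences}$(i)$ gives $\va(F^i)\cong F^i(\La)$ modulo terms in $\mmodd\CX$ (which vanish on $\La$, because $\mmodd\CX=\Ker(\va)$ and $\va_\rho$ agrees with evaluation at $\La$ on projectives — concretely $\va(F^i)$ is the cokernel of a projective presentation of $F^i$, hence its value is $F^i$ evaluated at the representable $\Lambda$), acyclicity of $(\va(F^i),\va(\pa^i))$ is equivalent to acyclicity of $\mathbf F(\La)$. Thus the objects of $\D^{\bb}_0(\mmod\CX)$ are exactly the complexes $\mathbf F\in\K^{\bb}(\mmod\CX)$ that become isomorphic, in $\D^{\bb}(\mmod\CX)$, to a complex lying in $\K^{\bb}_{\La\text{-}\ac}(\mmod\CX)$.

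The key technical step is to upgrade this object-level coincidence to an equivalence of subcategories of $\D^{\bb}(\mmod\CX)$, and here Proposition \ref{ExSeqCom} does the work. Given $\mathbf F\in\K^{\bb}_{\La\text{-}\ac}(\mmod\CX)$, the four-term exact sequence $0\to\mathbf F_0\to\mathbf F\to(-,\D^{\bb}_{\va}(\mathbf F))\vert_{\CX}\to\mathbf F_1\to 0$ has $\mathbf F_0,\mathbf F_1$ over $\mmodd\CX$ and middle-left term $(-,\D^{\bb}_{\va}(\mathbf F))\vert_{\CX}$ which is representable-valued, hence acyclic as a complex of functors once $\D^{\bb}_{\va}(\mathbf F)$ is acyclic — and $\mathbf F(\La)$ acyclic forces $\D^{\bb}_{\va}(\mathbf F)$ acyclic by the previous paragraph. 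Splicing the four-term sequence into two short exact sequences of complexes and passing to $\D^{\bb}(\mmod\CX)$, one gets triangles showing $\mathbf F$ is built from $\mathbf F_0,\mathbf F_1\in\C^{\bb}(\mmodd\CX)$ and an acyclic complex; so $\D^{\bb}_{\va}(\mathbf F_0)\simeq\D^{\bb}_{\va}(\mathbf F_1)\simeq 0$ because $\va$ kills $\mmodd\CX$, whence $\mathbf F\in\D^{\bb}_0(\mmod\CX)$. This proves $\K^{\bb}_{\La\text{-}\ac}(\mmod\CX)/\K^{\bb}_{\ac}(\mmod\CX)\subseteq\D^{\bb}_0(\mmod\CX)$. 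For the reverse inclusion, if $\mathbf F\in\D^{\bb}_0(\mmod\CX)$ then $\D^{\bb}_{\va}(\mathbf F)$ is acyclic, so by the equivalence $\mathbf F(\La)\simeq\D^{\bb}_{\va}(\mathbf F)$ of complexes (up to the $\mmodd\CX$ error terms, which die on $\La$) the complex $\mathbf F$ already represents an object of $\K^{\bb}_{\La\text{-}\ac}(\mmod\CX)$ — more carefully, $\mathbf F$ itself lies in $\K^{\bb}_{\La\text{-}\ac}(\mmod\CX)$ on the nose, since $\mathbf F_0,\mathbf F_1$ and hence their values at $\La$ are zero, making $\mathbf F(\La)\cong(-,\D^{\bb}_{\va}(\mathbf F))\vert_{\CX}$ evaluated at $\La$, which is $\D^{\bb}_{\va}(\mathbf F)$, acyclic.

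Finally I would assemble these two inclusions into the asserted equivalence of triangulated categories, checking that the functor realizing it is induced by the identity on $\K^{\bb}(\mmod\CX)$ passed through the chain of Verdier quotients $\K^{\bb}_{\La\text{-}\ac}\hookrightarrow\K^{\bb}\twoheadrightarrow\K^{\bb}/\K^{\bb}_{\ac}$, which is visibly fully faithful and, by the above, essentially surjective onto $\D^{\bb}_0(\mmod\CX)$. The step I expect to be the genuine obstacle is the bookkeeping in the middle paragraph: turning the static four-term exact sequence of \emph{complexes} from Proposition \ref{ExSeqCom} into \emph{triangles} in $\D^{\bb}(\mmod\CX)$ requires care — one must break it as $0\to\mathbf F_0\to\mathbf F\to\mathbf Z\to 0$ and $0\to\mathbf Z\to(-,\D^{\bb}_{\va}(\mathbf F))\vert_{\CX}\to\mathbf F_1\to 0$ with $\mathbf Z$ the image complex, verify these are degreewise split or at least yield short exact sequences of complexes hence triangles, and track that the connecting data is compatible with $\D^{\bb}_{\va}$ — together with confirming that $(-,\D^{\bb}_{\va}(\mathbf F))\vert_{\CX}$ is acyclic as a complex over $\mmod\CX$ exactly when its evaluation at every $P\in\prj\La$ is, which uses that $\CX\supseteq\prj\La$ and that a short exact sequence of functors over $\mmod\CX$ is exact iff it is so after evaluating at all of $\prj\La$ is \emph{false} in general, so one instead argues that $\D^{\bb}_{\va}(\mathbf F)$ acyclic $\Rightarrow$ the Yoneda complex $(-,\D^{\bb}_{\va}(\mathbf F))$ is acyclic because Yoneda is exact on the relevant resolutions.
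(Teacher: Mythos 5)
Your argument ultimately rests on the same observation as the paper's: evaluating the four-term exact sequence of Proposition \ref{ExSeqCom} at $\La$ kills the end terms $\mathbf F_0,\mathbf F_1$ (they live over $\mmodd\CX=\Ker(\va)$ and $\va(G)\cong G(\La)$), so $\mathbf F(\La)\cong \D^{\bb}_{\va}(\mathbf F)$ as complexes, and the two defining conditions --- $\D^{\bb}_{\va}(\mathbf F)$ acyclic versus $\mathbf F\in\K^{\bb}_{\La\mbox{-}\ac}(\mmod\CX)$ --- coincide on the nose. Your first and third paragraphs say exactly this, and since both sides of the asserted equivalence are full subcategories of $\D^{\bb}(\mmod\CX)$, this object-level identification is all that is needed.

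The middle paragraph, however, contains a genuinely false step. You claim that $(-,\D^{\bb}_{\va}(\mathbf F))\vert_{\CX}$ is an acyclic complex of functors whenever $\D^{\bb}_{\va}(\mathbf F)$ is acyclic. This fails as soon as $\CX$ contains a non-projective module: for $\La=k[x]/(x^2)$ and the exact complex $0\to k\to\La\to k\to 0$, applying $\Hom_{\La}(k,-)$ gives $0\to k\to k\to k\to 0$ with the first map an isomorphism, which is not exact. Your closing sentence acknowledges the difficulty, but the proposed repair (``Yoneda is exact on the relevant resolutions'') is not an argument; $\Hom_{\La}(X,-)$ is only left exact when $X\notin\prj\La$. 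Fortunately the whole detour is unnecessary: for the inclusion of $\K^{\bb}_{\La\mbox{-}\ac}(\mmod\CX)/\K^{\bb}_{\ac}(\mmod\CX)$ into $\D^{\bb}_{0}(\mmod\CX)$ you only need that $\mathbf F(\La)$ acyclic implies $\D^{\bb}_{\va}(\mathbf F)$ acyclic, which is immediate from the natural isomorphism $\va(F^i)\cong F^i(\La)$ established in your first paragraph --- indeed you invoke precisely this implication just before making the false claim, so your text already proves the inclusion and then appends a broken embellishment. Deleting the triangle-splicing portion of the middle paragraph leaves a correct proof that is essentially the paper's.
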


\begin{proof}
Let $\mathbf{F}$ be a complex in $\D^{\bb}(\mmod\CX)$. For the proof, it is enough to show that if $\D^{\bb}_{\va}(\mathbf{F})$ is an acyclic complex, then $\mathbf{F} \in \K^{\bb}_{\La\mbox{-}\ac}(\mmod\CX).$ But it follows from the exact sequence
\[0 \lrt \mathbf{F}_0 \lrt \mathbf{F} \lrt (-, \D^{\bb}_{\va}(\mathbf{F}))\vert_{\CX} \lrt \mathbf{F}_1 \lrt 0,\]
of the above Proposition. The proof is hence complete.
\end{proof}

Let
\[\Psi: \D^{\bb}_{\La}(\mmod\CX)=\frac{\K^{\bb}(\mmod \CX)}{\K^{\bb}_{\La \mbox{-} \ac}(\mmod \CX)} \lrt \frac{\K^{\bb}(\mmod \CX)/ \K^{\bb}_{\ac}(\mmod \CX)}{\K^{\bb}_{\La \mbox{-} \ac}(\mmod \CX)/ \K^{\bb}_{\ac}(\mmod \CX)}=\frac{\D^{\bb}(\mmod\CX)}{\D^{\bb}_{0}(\mmod\CX)}\]
denote the equivalence of triangulated quotients [V2, Corollaire 4-3]. Clearly $\Psi$ acts as identity on the objects but sends a roof $\frac{f}{s}$ to the roof $\frac{f/1}{s/1}$.

The composition
\[\widetilde{\va}:=\widetilde{\D^{\bb}_{\va}}\Psi:\D^{\bb}_{\La}(\mmod\CX) \lrt \D^{\bb}(\mmod\La)\]
attaches to any complex $\mathbf{F}$ the complex $\widetilde{\va}(\mathbf{F})$, where
\[\widetilde{\va}(\mathbf{F}): \ \cdots \lrt \va(F^{i-1}) \st{\va(\pa^{i-1})} \lrt \va(F^i) \st{\va(\pa^i)} \lrt \va(F^{i+1}) \lrt \cdots.\]
Similarly, $\widetilde{\va}$ sends a roof $\xymatrix{\mathbf{F}  & \ar[l]_s \mathbf{H} \ar[r]^f & \mathbf{G}}$ to the roof
\[\xymatrix{\widetilde{\va}(\mathbf{F}) & \ar[l]_{\widetilde{\va}(s)} \widetilde{\va}(\mathbf{H}) \ar[r]^{\widetilde{\va}(f)} & \widetilde{\va}(\mathbf{F})},\]
where for each morphism $f$ in $\K^{\bb}(\mmod\CX)$, $\widetilde{\va}(f)$ is the homotopy equivalence of a chain map in $\C^{\bb}(\mmod\La)$ obtained by applying $\va$ terms by terms on  the chain map $f$.

\begin{proposition}\label{equi}
The functor $\widetilde{\va}$ is an equivalence of triangulated categories. In particular, the functor $\widetilde{\D^{\bb}_{\va}}$ is an equivalence of triangulated categories.
\end{proposition}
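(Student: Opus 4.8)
The plan is to build an explicit quasi-inverse to $\widetilde{\va}$ and check that the two composites are isomorphic to the respective identity functors, working at the level of triangulated categories. First I would exhibit a functor in the other direction. The Yoneda-type construction $M \mapsto \CA( - ,M)\vert_{\CX}$, applied term by term, sends a bounded complex $\mathbf{M} = (M^i,\partial^i)$ over $\mmod\La$ to the complex $(( - ,M^i)\vert_{\CX}, ( - ,\partial^i)\vert_{\CX})$ over $\mmod\CX$; by Remark~\ref{Definition of Va}$(ii)$ this lands in $\mmod\CX$ since $\CX$ is contravariantly finite. This gives a functor $\C^{\bb}(\mmod\La) \rt \C^{\bb}(\mmod\CX)$, hence $\K^{\bb}(\mmod\La) \rt \K^{\bb}(\mmod\CX)$; I would then check it carries quasi-isomorphisms over $\mmod\La$ to morphisms that become invertible in $\D^{\bb}_{\La}(\mmod\CX)$ — equivalently, that a mapping cone which is acyclic over $\mmod\La$ goes to a complex in $\K^{\bb}_{\La\mbox{-}\ac}(\mmod\CX)$ — which is immediate because evaluating $( - ,M^i)\vert_{\CX}$ at $\La$ just recovers $M^i$ (as $\La\in\prj\La\subseteq\CX$ and $\Hom_\La(\La,M^i)\cong M^i$). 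This yields a triangle functor $\Phi: \D^{\bb}(\mmod\La) \rt \D^{\bb}_{\La}(\mmod\CX)$.

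Next I would verify $\widetilde{\va}\,\Phi \simeq \mathrm{id}_{\D^{\bb}(\mmod\La)}$. On a complex $\mathbf{M}$ over $\mmod\La$, $\Phi(\mathbf{M})$ has terms $( - ,M^i)\vert_{\CX}$, and by the description in Remark~\ref{Definition of Va}$(i)$ one computes $\va\big(( - ,M^i)\vert_{\CX}\big) \cong M^i$ functorially (a right $\CX$-approximation presentation of $( - ,M^i)\vert_{\CX}$ has cokernel $M^i$ since $\prj\La\subseteq\CX$), so $\widetilde{\va}(\Phi(\mathbf{M}))\cong\mathbf{M}$ naturally. For the other composite $\Phi\,\widetilde{\va}\simeq\mathrm{id}_{\D^{\bb}_{\La}(\mmod\CX)}$, I would invoke Proposition~\ref{ExSeqCom}: for $\mathbf{F}\in\C^{\bb}(\mmod\CX)$ there is a four-term exact sequence
\[0 \lrt \mathbf{F}_0 \lrt \mathbf{F} \lrt (-, \D^{\bb}_{\va}(\mathbf{F}))\vert_{\CX} \lrt \mathbf{F}_1 \lrt 0\]
with $\mathbf{F}_0,\mathbf{F}_1$ over $\mmodd\CX$. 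Now $(-, \D^{\bb}_{\va}(\mathbf{F}))\vert_{\CX}$ is precisely $\Phi(\widetilde{\va}(\mathbf{F}))$ (up to the natural identification $\mathbf{V}\simeq\D^{\bb}_{\va}(\mathbf{F})$ noted in the proof of Proposition~\ref{ExSeqCom}), and the key point is that any complex over $\mmodd\CX$ lies in $\K^{\bb}_{\La\mbox{-}\ac}(\mmod\CX)$, since a functor $G\in\mmodd\CX$ has a presentation $\CX( - ,A)\rt\CX( - ,A')\rt G\rt 0$ with $A\rt A'$ an epimorphism, so $G(\La)=\Coker(\Hom(\La,A)\rt\Hom(\La,A'))=\Coker(A\to A')=0$; thus evaluating a bounded complex over $\mmodd\CX$ at $\La$ gives the zero complex, which is acyclic. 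Splicing the four-term sequence into two short exact sequences of complexes and passing to $\D^{\bb}_{\La}(\mmod\CX)$ — where complexes over $\mmodd\CX$ become zero — yields an isomorphism $\mathbf{F}\cong(-, \D^{\bb}_{\va}(\mathbf{F}))\vert_{\CX}=\Phi\widetilde{\va}(\mathbf{F})$ in $\D^{\bb}_{\La}(\mmod\CX)$.

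The main obstacle I anticipate is \emph{naturality}: one must check that all these isomorphisms — $\va(( - ,M)\vert_{\CX})\cong M$, the identification $\mathbf{V}\simeq\D^{\bb}_{\va}(\mathbf{F})$, and the splicing of the four-term sequence — are natural in $\mathbf{M}$ and $\mathbf{F}$ respectively, and compatible with the roof calculus of the Verdier quotients, so that they assemble into natural transformations of triangle functors rather than mere object-wise bijections. Remark~\ref{Two exact sequences}$(ii)$ is designed for exactly this: it provides the functoriality of the four-term sequence of Proposition~\ref{ExSeqCom} in $\mathbf{F}$, so the short exact sequences of complexes, and hence the connecting isomorphisms in the quotient, are natural. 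The triangulated structure is preserved essentially for free, since $\Phi$ and $\widetilde{\va}$ are defined term-wise and the relevant quotient functors are triangle functors. The final sentence, that $\widetilde{\D^{\bb}_{\va}}$ itself is an equivalence, then follows since $\Psi$ is already an equivalence by [V2, Corollaire 4-3] and $\widetilde{\va}=\widetilde{\D^{\bb}_{\va}}\,\Psi$.
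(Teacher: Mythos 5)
Your construction is essentially the paper's own proof: the quasi-inverse $\Phi$ you build is exactly the functor $\eta(\mathbf{X})=(-,\mathbf{X})\vert_{\CX}$ used there, and both arguments rest on the same two ingredients, namely Proposition~\ref{ExSeqCom} together with the observation that evaluation at $\La$ recovers the original complex (so that complexes over $\mmodd\CX$ die in $\D^{\bb}_{\La}(\mmod\CX)$). The only difference is packaging --- the paper verifies that $\eta$ is faithful, full and dense and then notes $\widetilde{\va}\circ\eta\simeq 1$, whereas you check both composites are naturally isomorphic to the identities; your naturality concern is handled exactly as you indicate, via the uniqueness statements of Remark~\ref{Two exact sequences}$(ii)$, and the rest goes through.
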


\begin{proof}
Since $\widetilde{\va} =\widetilde{\D^{\bb}_{\va}}\Psi $ and  $ \Psi$ is an equivalence, $\widetilde{\va}$ is an equivalence if and only if so is $\widetilde{\D^{\bb}_{\va}}$. This proves the second part. To prove the first part,
we define the functor  $\eta: \D^{\bb}(\mmod \Lambda) \lrt \D^{\bb}_{\Lambda}(\mmod \CX)$ as follows: 
For every complex $ \mathbf{X},  $ $\eta (\X):=(-,\X)\vert_{\CX}$. Also, $\eta$ maps every roof $\xymatrix{\X  & \ar[l]_s\BZ \ar[r]^f & \Y }$ in $\D^{\bb}(\mmod \Lambda) $ to the roof
$$\xymatrix{ (-,\X)\vert_{\CX} & \ar[l]_{(-,s)} (-,\BZ)\vert_{\CX} \ar[r]^{(-,f)}& (-,\Y)\vert_{\CX} }$$ in $\D^{\bb}_{\Lambda}(\mmod \CX)$. Observe that, since the mapping cone $M(s)$ belongs to $\K^{\bb}_{\ac}(\mmod \Lambda)$, the mapping cone ${M}((-,s))$ belongs to $\K^{\bb}_{\Lambda\mbox{-} \ac}(\mmod \CX)$.
Now, we show that the functor $\eta$ is {\bf faithful}, {\bf full} and {\bf dense}. 
Let $\xymatrix{\X & \ar[r]^f\BZ\ar[l]_s  & \Y }$ be a roof in $\D^{\bb}(\mmod \Lambda)$ such that the induced roof 
$$\xymatrix{(-, \X)\vert_{\CX} & (-, \BZ)\vert_{\CX} \ar[l]_{(-,s)} \ar[r]^{(-,f)}& (-,\Y)\vert_{\CX} }$$ 
is zero in $\D^{\bb}_{\Lambda}(\mmod \CX)$. So, there is a morphism $\xi: {\bf F} \lrt (-,\BZ) $ such that the mapping cone $M(\xi) $ belongs to $ \K^{\bb}_{\Lambda\mbox{-} \ac}(\mmod \CX)$ and $ (-, f) \circ \xi =0$ in $\K^{\bb}(\mmod \CX)$. By Proposition \ref{ExSeqCom},
there exists an exact sequence
$ \xymatrix{ 0 \ar[r] & {\bf F}_0 \ar[r]& {\bf F} \ar[r]^{\alpha \ \ \ \ \ \  }  & (-,  \D^{\bb}_{\va}(\mathbf{F}))\vert_{\CX} \ar[r]  &  {\bf F}_1\ar[r] &0,}$
where $\mathbf{F}_0$ and $\mathbf{F}_1$ are complexes over $\mmodd\CX$.
Since,  $\alpha(\Lambda): {\bf F}(\Lambda) \lrt \D^{\bb}_{\va}(\mathbf{F})$ is an isomorphism, there is a map $\D^{\bb}_{\va}(\mathbf{F})  \st{(\alpha(\Lambda))^{-1}} \lrt {\bf F}(\Lambda) \st{\xi(\Lambda)} \lrt \BZ$ with acyclic cone, such that $f \circ \xi (\Lambda) \circ (\alpha(\Lambda))^{-1}=0$ in $\K^{\bb}(\mmod \Lambda)$.  Hence, the roof  $\xymatrix{\X & \BZ \ar[l]_s\ar[r]^f & \Y }$ is zero in $\D^{\bb}(\mmod \Lambda)$. Thus, $\eta$ is {\bf faithful}.
The same argument as above works to prove that $\eta$ is {\bf full}. Indeed, let $ \xymatrix{(-,\X)\vert_{\CX}  & {\bf H} \ar[r]^f \ar[l]_s & (-, \Y)\vert_{\CX} } $ be a roof in $\D^{\bb}_{\Lambda}(\mmod \CX)$. By Proposition \ref{ExSeqCom}, there exists an exact sequence
$$ 0 \lrt {\bf H}_0 \lrt {\bf H} \st{\beta}\lrt (-,\D^{\bb}_{\va}(\mathbf{H}))\vert_{\CX} \lrt {\bf H}_1 \lrt 0,$$
where ${\bf H}_0$ and ${\bf H}_1$ are complexes over $\mmodd \CX$. 
Proposition \ref{ExSeqCom} allows us to follow similar argument used in 
Remark \ref{Two exact sequences} $ (ii) $ and deduce the isomorphisms
$  ((-, \D^{\bb}_{\va}(\mathbf{H}))\vert_{\CX}, (-, \X)\vert_{\CX}) \cong ({\bf H}, (-, {\bf X})\vert_{\CX})$ and
$  ((-, \D^{\bb}_{\va}(\mathbf{H}))\vert_{\CX}, (-,\Y)\vert_{\CX}) \cong ({\bf H}, (-, \Y)\vert_{\CX}).$
Therefore, there are morphisms $\beta_{\X}: (-, \D^{\bb}_{\va}(\mathbf{H}))\vert_{\CX} \lrt (-, \X)\vert_{\CX}$ and $\beta_{\Y}: (-, \D^{\bb}_{\va}(\mathbf{H}))\vert_{\CX} \lrt (-,\Y)\vert_{\CX}$  such that $\beta_{\X} \circ \beta=s$ and $\beta_{\Y}\circ \beta=f$. Note that since the mapping cone $M (s) \in \K^{\bb}_{\Lambda \mbox{-} \ac}(\mmod \CX)$ and $\beta(\Lambda)$ is an isomorphism,  the mapping cone $M (\beta_{\X})$ belongs to $\K^{\bb}_{\Lambda\mbox{-} \ac}(\mmod \CX)$.
Now,  the commutative diagram
 \[\xymatrix@C-0.8pc@R-0.5pc{ & & {\bf H} \ar[dr]^{\beta} \ar[dl]_{\rm id}& & \\
 & {\bf H} \ar[dl]_s \ar[drrr]^{ f  \ \ } && (-, \D^{\bb}_{\va}(\mathbf{H}))\vert_{\CX}\ar[dr]^{\beta_{\Y}} \ar[dlll]_{\beta_{\X}} &
 \\ (-, \X)\vert_{\CX}  & & & & (-,\Y)\vert_{\CX}   }\]
 implies that the roof $\xymatrix{ (-,\X)\vert_{\CX}  & {\bf H} \ar[r]^f \ar[l]_{\ \ \ s } & (-, \Y)\vert_{\CX} }$ is equivalent to the roof $$\xymatrix{(-,\X)\vert_{\CX}  & (-, \D^{\bb}_{\va}(\mathbf{H}))\vert_{\CX}\ar[r]^{\beta_{\X}} \ar[l]_{  \beta_{\Y}}& (-, \Y)\vert_{\CX} } $$ in $\D^{\bb}_\Lambda(\mmod \CX)$.

 Moreover, let $\X$ be a complex in $\D^{\bb}_{\Lambda}(\mmod \CX)$. Then an exact sequence
$$0 \lrt \X_0 \lrt \X \lrt (-, \D^{\bb}_{\va}(\mathbf{X}))\vert_{\CX} \lrt \X_1 \lrt 0$$
implies that $\X$ is isomorphic to $(-, \D^{\bb}_{\va}(\mathbf{X}))\vert_{\CX}$ in $\D^{\bb}_{\Lambda}(\mmod \CX)$  and so $\eta$ is {\bf dense}.

Obviously, $ \widetilde{\va}\circ\eta \simeq 1_{\D^{\bb}(\mmod \Lambda)} $ and so $ \widetilde{\va} $ is an equivalence.
\end{proof}

\begin{remark}
The triangle-equivalence
$$\widetilde{\D^{\bb}_{\va}}: \D^{\bb}(\mmod \CX)/ \D^{\bb}_{0}(\mmod \CX) \lrt \D^{\bb}(\mmod \La).$$
is in fact a derived version of  the Auslander{\bf ' s} formula. This derived level formula has been proved by Krause { \cite[Corollary 3.2]{Kr1}} for the case where $\CX=\mmod\La$.
\end{remark}

\section{Categorical resolutions of bounded derived categories}\label{Last Section}
In this section, we give a functorial approach to show that $\D^b(\mmod\La)$, the bounded derived category of $\La$, admits a categorical resolution, where $\La$ is an arbitrary Artin algebra.

We begin by the definition of a categorical resolution of the bounded derived category of an Artin algebra. Although, the definition in literature is for arbitrary triangulated categories, in this paper we only concentrate on the bounded derived categories of Artin algebras. 

We follow the definition presented by \cite[Definition 2.2]{Z}, which is a combination of a definition due to Bondal and Orlov \cite{BO} and also another one due to Kuznetsov \cite[Definition 3.2]{Kuz}, both as different attempts for providing a categorical translation of the notion of the resolutions of singularities.

\begin{definition} (\cite[Definition 2.2]{Z})\label{Categorical Resolution}
Let $\La$ be an Artin algebra of infinite global dimension. A categorical resolution of $\D^{\bb}(\mmod\La)$, is a triple $(\D^{\bb}(\mmod\La'), \pi_{*}, \pi^{*})$, where $\La'$ is an Artin algebra of finite global dimension and $\pi_*: \D^{\bb}(\mmod\La') \lrt \D^{\bb}(\mmod\La)$ and $\pi^*: \K^{\bb}(\prj\La) \lrt \D^{\bb}(\mmod\La')$ are triangle functors satisfying the following conditions:
\begin{itemize}
\item[$(i)$] $\pi_*$ induces a triangle-equivalence $\frac{\D^{\bb}(\mmod \La')}{\Ker(\pi_*)} \simeq \D^{\bb}(\mmod \La)$;
\item[$(ii)$] $\pi^*$ is left adjoint to $\pi_*$ on $\K^{\bb}(\prj\La)$. That is, for every ${\bf P} \in \K^{\bb}(\prj\La)$ and every ${\bf X'} \in \D^{\bb}(\mmod\La')$, there exists a functorial isomorphism \[\D^{\bb}(\mmod\La')(\pi^*({\bf P}), {\bf X'}) \cong \D^{\bb}(\mmod\La)({\bf P}, \pi_*({\bf X'}));\]
\item[$(iii)$] The unit $\eta: 1_{\K^b(\prj\La)} \lrt \pi_*\pi^*$ is a natural isomorphism.
\end{itemize}
Furthermore, a categorical resolution $(\D^{\bb}(\mmod\La'), \pi_{*}, \pi^{*})$ of $\D^{\bb}(\mmod\La)$ is called { \it weakly crepant} if $\pi^*$ is also a right adjoint to $\pi_*$ on $\K^b(\prj\La)$.
\end{definition}

Now we are in a position to give a functorial approach to show that the bounded derived category of every Artin algebra admits a categorical resolution.

\begin{theorem}\label{main4}
Let $\La$ be an Artin algebra of infinite global dimension and $\widetilde{\La}$ denote its A-algebra. Then $(\D^{\bb}(\mmod\widetilde{\La}), \D^{\bb}_{\va}, \K^{\bb}_{\va_{\la}})$ is a categorical resolution of $\D^{\bb}(\mmod\La)$.
\end{theorem}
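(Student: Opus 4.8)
The plan is to produce the Auslander algebra $\widetilde{\La}$ together with the idempotent $e \in \widetilde{\La}$ with $\La \simeq e\widetilde{\La}e$ (recorded in the introduction), and to take $\CX$ to be the image in $\mmod\widetilde{\La}$ of $\prj\La$ under a suitable embedding; concretely, one wants $\mmod\widetilde{\La} \simeq \mmod\CX$ for a contravariantly finite $\CX \subseteq \mmod\La$ containing $\prj\La$, so that the derived Auslander formula of Proposition~\ref{equi} applies. The key point is that $\widetilde{\La} = \End_\La(M)^{\op}$ for $M = \bigoplus_{i=1}^n \La/J^i$, and $\mmod\widetilde{\La} \simeq \mmod(\add M)$ via the functor $\Hom_\La(-,M)|_{\add M}$ (Yoneda); since $\La$ is a summand of $M$, the subcategory $\CX := \add M$ contains $\prj\La$ and is functorially finite in $\mmod\La$ (it has only finitely many indecomposables). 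Then Proposition~\ref{equi} gives a triangle equivalence $\widetilde{\D^{\bb}_{\va}} : \D^{\bb}(\mmod\widetilde{\La})/\D^{\bb}_0(\mmod\widetilde{\La}) \xrightarrow{\ \sim\ } \D^{\bb}(\mmod\La)$, and Auslander's theorem (stated in the introduction) gives $\gldim\widetilde{\La} < \infty$. So I would set $\pi_* := \D^{\bb}_{\va} : \D^{\bb}(\mmod\widetilde{\La}) \to \D^{\bb}(\mmod\La)$, which by the above factors through an equivalence after quotienting by $\Ker(\pi_*) = \D^{\bb}_0(\mmod\widetilde{\La})$; this verifies condition $(i)$ of Definition~\ref{Categorical Resolution}.

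For condition $(ii)$ I would identify $\pi^* = \K^{\bb}_{\va_{\la}}$ as the left-derived functor on $\K^{\bb}(\prj\La)$ of the left adjoint $\va_\la$ of $\va$ from the recollement $\mmodd\CX \to \mmod\CX \to \mmod\La$. Since $(\va_\la, \va)$ is an adjoint pair on the abelian level, and $\K^{\bb}(\prj\La)$ consists of complexes of projectives on which $\va_\la$ is exact (indeed $\va_\la$ sends $\Hom_\La(-,P)$-type objects back to representables, so it behaves well on projectives), the adjunction $\D^{\bb}(\mmod\widetilde{\La})(\pi^*\mathbf{P}, \mathbf{X}') \cong \D^{\bb}(\mmod\La)(\mathbf{P}, \pi_*\mathbf{X}')$ should follow by a standard derived-adjunction argument: resolve $\mathbf{P}$ by projectives (it already is one), apply $\va_\la$ termwise, and use that $\va \circ \va_\la \simeq \mathrm{id}$ on $\mmod\La$ together with $\va$ being exact. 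For condition $(iii)$, the unit $\eta : 1_{\K^{\bb}(\prj\La)} \to \pi_*\pi^*$ is computed termwise: on a projective $P$, $\va_\la(\text{representation of }P)$ is again (essentially) $P$ viewed in $\mmod\CX$ and $\va$ returns it, so $\va \circ \va_\la|_{\prj\La} \simeq \mathrm{id}_{\prj\La}$, whence $\eta$ is a natural isomorphism on $\K^{\bb}(\prj\La)$. This is exactly the place where $\CX \supseteq \prj\La$ is used essentially.

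The main obstacle I anticipate is condition $(ii)$: verifying that $\pi^* = \K^{\bb}_{\va_{\la}}$ is genuinely left adjoint to $\pi_* = \D^{\bb}_{\va}$ \emph{after passing to derived categories}, not merely on the abelian/homotopy level. The subtlety is that $\pi_*$ is only an equivalence onto its target \emph{after} killing $\D^{\bb}_0(\mmod\widetilde{\La})$, so one must check that $\pi^*$ lands in the right place and that no correction by objects of $\D^{\bb}_0$ is needed — i.e., that for $\mathbf{P} \in \K^{\bb}(\prj\La)$ the object $\pi^*\mathbf{P}$ is "orthogonal" to $\D^{\bb}_0(\mmod\widetilde{\La})$ in the appropriate sense, so that $\D^{\bb}(\mmod\widetilde{\La})(\pi^*\mathbf{P}, \mathbf{X}')$ does not see the quotient. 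I expect this to follow from the explicit exact sequence $0 \to \mathbf{F}_0 \to \mathbf{F} \to (-,\D^{\bb}_{\va}(\mathbf{F}))|_{\CX} \to \mathbf{F}_1 \to 0$ of Proposition~\ref{ExSeqCom} (the fundamental sequence), applied to $\mathbf{F} = \pi^*\mathbf{P}$, since complexes of the form $(-, \mathbf{Y})|_{\CX}$ with $\mathbf{Y} \in \D^{\bb}(\mmod\La)$ behave well — this is precisely the content of the faithful/full/dense argument in the proof of Proposition~\ref{equi}. Once the adjunction on morphism spaces is set up through this fundamental sequence, functoriality and the verification of the triangle identities should be routine bookkeeping; weak crepancy (right adjointness) would additionally require $\va_\la$ to also compute a right adjoint up to the relevant resolutions, which may or may not hold in general and I would not expect to claim.
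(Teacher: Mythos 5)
Your proposal is correct and follows essentially the same route as the paper: the same choice $\CX=\add M$ with $\mmod\CX\simeq\mmod\widetilde{\La}$, condition $(i)$ from Proposition~\ref{equi}, $\pi^{*}=\K^{\bb}_{\va_{\la}}$ given termwise by $\mathbf{P}\mapsto(-,\mathbf{P})\vert_{\CX}$, condition $(ii)$ via the fundamental exact sequence of Proposition~\ref{ExSeqCom} together with the vanishing of $\K^{\bb}(\mmod\CX)((-,\mathbf{P}),\mathbf{G})$ for $\mathbf{G}$ over $\mmodd\CX$, and condition $(iii)$ by the termwise identity $\va((-,P)\vert_{\CX})=P$. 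The one small correction: the fundamental sequence is applied to the arbitrary second argument $\mathbf{F}\in\D^{\bb}(\mmod\CX)$ (reducing it to $(-,\D^{\bb}_{\va}(\mathbf{F}))\vert_{\CX}$ up to cones over $\mmodd\CX$), not to $\pi^{*}\mathbf{P}$ itself, where it degenerates; the orthogonality you want is supplied by the termwise Yoneda vanishing just mentioned.
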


\begin{proof}
Let $\CX$ be a contravariantly finite subcategory of $\mmod \Lambda$ containing $\prj \Lambda$. We prove the theorem in 6 steps.

{\bf Step I.} As it is mentioned in Section \ref{A Derived Version of Auslander's Formula}, the exact functor $\va:\mmod\CX \lrt \mmod\La$, defined in Remark \ref{Definition of Va} $ (i) $, can be extended naturally to $\D^{\bb}(\mmod\CX)$ to induce a triangle functor $$\D^{\bb}_{\va}:\D^{\bb}(\mmod\CX) \lrt \D^{\bb}(\mmod\La).$$ It acts on objects, as well as roofs, terms by terms.

{\bf Step II.} 
{ We define the functor $\K^{\bb}_{\va_{\la}}: \K^{\bb}(\prj\La) \lrt \K^{\bb}(\mmod\CX)$ as follows: For every complex $\mathbf{P} \in \K^{\bb}(\prj\La)$, $\K^{\bb}_{\va_{\la}}(\mathbf{P}):=( - , \mathbf{P})$. So in fact, it is a functor from $\K^{\bb}(\prj\La)$ to $\K^{\bb}(\prj(\mmod\CX))$. That is, for every complex $\mathbf{P} \in \K^{\bb}(\prj\La)$, $\K^{\bb}_{\va_{\la}}(\mathbf{P})=( - , \mathbf{P})$ is a bounded complex of projective $\CX$-modules.}

{\bf Step III.} As it is mentioned in Section \ref{A Derived Version of Auslander's Formula},
denote the kernel of $\D^{\bb}_{\va}$ by $\D^{\bb}_{0}(\mmod\CX)$. 
By Proposition \ref{equi}, the induced functor
\[\frac{\D^{\bb}(\mmod\CX)}{\D^{\bb}_{0}(\mmod\CX)} \lrt \D^{\bb}(\La),\] 
denoted by $\widetilde{\D^{\bb}_{\va}},$ is an equivalence of triangulated categories. 

{\bf Step IV.} $\K^{\bb}_{\va_{\la}}$ is left adjoint to $\D^{\bb}_{\va}$ on $\K^{\bb}(\prj\La)$. To prove this we show that for every complexes $\mathbf{P} \in \K^{\bb}(\prj\La)$ and $\mathbf{F} \in \D^{\bb}(\mmod\CX)$, there exists an isomorphism
\[\D^{\bb}(\mmod\CX)(\K^{\bb}_{\va_{\la}}(\mathbf{P}), \mathbf{F}) \cong \D^{\bb}(\mmod\La)(\mathbf{P}, \D^{\bb}_{\va}(\mathbf{F})),\]
of abelian groups.
Let $\mathbf{F} \in \D^{\bb}(\mmod\CX)$. 
By Proposition \ref{ExSeqCom}, there exists an exact sequence of complexes
$0 \lrt \mathbf{F}_0 \lrt \mathbf{F} \lrt (-, \D^{\bb}_{\va}(\mathbf{F}))\vert_{\CX} \lrt \mathbf{F}_1 \lrt 0,$
such that $\mathbf{F}_0$ and $\mathbf{F}_1$ are complexes over $\mmodd\CX$. This sequence can be divided into the following two short exact sequences of complexes
\[0 \lrt \mathbf{F}_0 \lrt \mathbf{F}\lrt \mathbf{K} \lrt 0 \ \ \ {\rm and} \ \ \ 0 \rt  \mathbf{K} \lrt ( - , \D^{\bb}_{\va}(\mathbf{F})) \lrt \mathbf{F}_1 \lrt 0.\]
These two sequences, in turn, induce the following two triangles
$ \mathbf{F}_0 \lrt \mathbf{F} \lrt \mathbf{K} \rightsquigarrow $ and $ \mathbf{K} \lrt ( - , \D^{\bb}_{\va}(\mathbf{F})) \lrt \mathbf{F}_1 \rightsquigarrow,$
in $\D^{\bb}(\mmod\CX)$, where $\mathbf{F}_0$ and $\mathbf{F}_1$ are considered as objects of $\D^{\bb}(\mmodd\CX)$.
Applying the functor $\D^{\bb}(\mmod \CX)(\K^{\bb}_{\va_{\la}}(\mathbf{P}), - )$ on these triangles, there are the induced two long exact sequences of abelian groups
\[(\K^{\bb}_{\va_{\la}}(\mathbf{P}), \mathbf{F_0}) \lrt (\K^{\bb}_{\va_{\la}}(\mathbf{P}), \mathbf{F}) \lrt (\K^{\bb}_{\va_{\la}}(\mathbf{P}), \mathbf{K}) \lrt (\K^{\bb}_{\va_{\la}}(\mathbf{P}), \mathbf{F_0}[1])\] and
\[(\K^{\bb}_{\va_{\la}}(\mathbf{P}), \mathbf{F_1}[-1]) \lrt (\K^{\bb}_{\va_{\la}}(\mathbf{P}), \mathbf{K}) \lrt (\K^{\bb}_{\va_{\la}}(\mathbf{P}), ( - , \D^{\bb}_{\va}(\mathbf{F}))) \lrt (\K^{\bb}_{\va_{\la}}(\mathbf{P}), \mathbf{F_1}),\]
respectively, where all Hom groups are taken in $\D^{\bb}(\mmod\CX)$. 
\\
But since $\mathbf{P} \in \K^{\bb}(\prj \La)$, $\K^{\bb}_{\va_{\la}}(\mathbf{P})=(-,\mathbf{P}) \in \K^{\bb}(\prj(\mmod\CX))$. Hence,  by applying some known abstract facts in triangulated categories, e.g. \cite[Corollary 10.4.7]{W},  all these Hom sets can be also considered in $\K^{\bb}(\mmod\CX)$.
 
{ On the other hand by applying Yoneda lemma  terms by terms, one can easily show  that for every $\mathbf{P} \in \K^{\bb}(\prj\La)$ and $\mathbf{G} \in \K^{\bb}(\mmodd\CX)$,
$ \K^{\bb}(\mmod\CX)(( - ,\mathbf{P}), \mathbf{G})=0$. Thus, there exists the following isomorphism of abelian groups.}
$$\K^{\bb}(\mmod\CX)(\K^{\bb}_{\va_{\la}}(\mathbf{P}), \mathbf{F}) \cong \K^{\bb}(\mmod\CX)(\K^{\bb}_{\va_{\la}}(\mathbf{P}), ( - , \D^{\bb}_{\va}(\mathbf{F}))).$$
Therefore, to complete the proof, it is enough to show that
$$\K^{\bb}(\mmod\CX)(\K^{\bb}_{\va_{\la}}(\mathbf{P}), ( - , \D^{\bb}_{\va}(\mathbf{F}))) \cong  \K^{\bb}(\mmod\La)(\mathbf{P}, \D^{\bb}_{\va}(\mathbf{F})).$$
This is a consequence of Yoneda lemma applying terms by terms in view of the fact that $\K^{\bb}_{\va_{\la}}(\mathbf{P})=( - , \mathbf{P})$. 
Note that since $\mathbf{P}$ is a bounded complex of projectives, by \cite[Corollary 10.4.7]{W}, { $\D^{\bb}(\mmod \Lambda) (\mathbf{P}, \D^{\bb}_{\va}(\mathbf{F})) \cong \K^{\bb}(\mmod \Lambda) (\mathbf{P}, \D^{\bb}_{\va}(\mathbf{F}))$}. The proof of this Step is now complete.

\vspace{10 mm}

{\bf Step V.} For every bounded complex of projectives $\mathbf{P}$, $ \D^{\bb}_{\va}\K^{\bb}_{\va_{\la}}(\mathbf{P})=\D^{\bb}_{\va}(( - ,\mathbf{P}))=\mathbf{P}.$

{\bf Step VI.} Since $\widetilde{{\La}}$ is A-algebra of $ \Lambda $, there is a $ \Lambda $-module $ M $ such that $\widetilde{{\La}}=\End_{\La}(M)$ { \cite[Page 47, Line 26]{Au2}}. Set $\CX:=\add M$. Then $\mmod\CX \simeq \mmod \widetilde{\La}.$ Therefore, the triple
{\bf $(\D^{\bb}(\widetilde{\La}), \D^{\bb}_{\va}, \K^{\bb}_{\va_{\la}})$} is a categorical resolution of $\D^{\bb}(\La)$.
\end{proof}

Towards the end of the paper, we show that if $\La$ is a self-injective Artin algebra of infinite global dimension, then the triple $(\D^{\bb}(\mmod\widetilde{\La}), \D^{\bb}_{\va}, \K^{\bb}_{\va_{\la}})$ introduced in the above theorem, provides a { \it weakly crepant} categorical resolution of $\D^{\bb}(\mmod\La)$. To do this, we need some lemmas.

\begin{lemma}\label{injective object}
Let $\La$ be an Artin algebra,  $\CX$ be a contravariantly finite subcategory of $\mmod \Lambda$ containing $\prj \Lambda$, and $I \in \inj\La$. Then the functor $(-,I)\vert_{\CX}$ is an injective object of $\mmod\CX.$
\end{lemma}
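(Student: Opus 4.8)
The plan is to verify Baer-type injectivity directly in the functor category $\mmod\CX$: I want to show that $\Ext^1_{\mmod\CX}\big(F, (-,I)\vert_{\CX}\big)=0$ for every $F \in \mmod\CX$, or equivalently that every short exact sequence $0 \to (-,I)\vert_{\CX} \to G \to H \to 0$ in $\mmod\CX$ splits. The key observation is that Hom out of representable functors into an arbitrary $\CX$-module is computed by Yoneda, and the functor $(-,I)\vert_{\CX}$ is, up to the presentation machinery of Remark \ref{Two exact sequences}, as close to representable as one can hope: by Remark \ref{Definition of Va} $(ii)$ it equals $\va_\rho(I)$, so it sits inside $\mmod\CX$, and Remark \ref{Two exact sequences} $(ii)$ furnishes the isomorphism $\big((-,\va(F))\vert_{\CX},(-,I)\vert_{\CX}\big)\cong \big(F,(-,I)\vert_{\CX}\big)$ which already signals that maps into $(-,I)\vert_{\CX}$ are insensitive to the $\mmodd\CX$-part of the source.

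First I would take a projective presentation $\CX(-,X_1)\xrightarrow{(-,d)}\CX(-,X_0)\to F\to 0$ in $\mmod\CX$, which by the definition of $\va$ (Remark \ref{Definition of Va} $(i)$) yields the exact sequence $X_1\xrightarrow{d}X_0\to\va(F)\to 0$ in $\mmod\La$. Applying $\Hom_\La(-,I)$ and using that $I$ is injective in $\mmod\La$, the sequence $0\to \Hom_\La(\va(F),I)\to \Hom_\La(X_0,I)\to \Hom_\La(X_1,I)\to 0$ is exact. Second, I would compute $\Hom_{\mmod\CX}\big(F,(-,I)\vert_{\CX}\big)$ from the projective presentation of $F$: the left-exactness of $\Hom_{\mmod\CX}(-,(-,I)\vert_{\CX})$ together with Yoneda ($\Hom_{\mmod\CX}(\CX(-,X),(-,I)\vert_{\CX})\cong \Hom_\La(X,I)$) identifies this Hom with $\Ker\big(\Hom_\La(X_0,I)\to\Hom_\La(X_1,I)\big)=\Hom_\La(\va(F),I)$. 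The upshot is a natural isomorphism $\Hom_{\mmod\CX}\big(F,(-,I)\vert_{\CX}\big)\cong \Hom_\La\big(\va(F),I\big)$, i.e. the functor $\Hom_{\mmod\CX}\big(-,(-,I)\vert_{\CX}\big)$ factors as $\Hom_\La(-,I)\circ\va$.

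Third, since $\va$ is exact (stated in Section \ref{A Derived Version of Auslander's Formula} and Remark \ref{Definition of Va}) and $\Hom_\La(-,I)$ is exact by injectivity of $I$, the composite $\Hom_\La(-,I)\circ\va$ is exact; hence $\Hom_{\mmod\CX}\big(-,(-,I)\vert_{\CX}\big)$ is exact, which is exactly the statement that $(-,I)\vert_{\CX}$ is injective in $\mmod\CX$. (One subtlety to record: $\mmod\CX$ need not have enough injectives or be closed under all the operations one is used to, so I would phrase the conclusion as exactness of the contravariant Hom-functor on all short exact sequences of $\mmod\CX$ rather than invoking a lifting property against monomorphisms with arbitrary cokernel — the two are equivalent here since every mono in $\mmod\CX$ has a cokernel in $\mmod\CX$.) I expect the main obstacle to be bookkeeping the naturality in $F$ of the isomorphism $\Hom_{\mmod\CX}(F,(-,I)\vert_{\CX})\cong\Hom_\La(\va(F),I)$ — in particular checking that a morphism $F\to F'$ induces via the lifted diagram of Remark \ref{Definition of Va} $(i)$ exactly the map $\va(F)\to\va(F')$ on the right-hand side — and in making sure the identification of $\Hom_{\mmod\CX}(F,-)$ with a kernel genuinely uses only left-exactness plus Yoneda on projectives, both of which are available from the quoted results of \cite{Au1} and \cite{AHK}.
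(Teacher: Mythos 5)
Your proof is correct and follows essentially the same route as the paper: both arguments reduce injectivity of $(-,I)\vert_{\CX}$ to exactness of the composite $\Hom_\La(-,I)\circ\va$ via the natural isomorphism $\Hom_{\mmod\CX}\bigl(F,(-,I)\vert_{\CX}\bigr)\cong\Hom_\La(\va(F),I)$. The only cosmetic difference is that the paper quotes this isomorphism as the adjunction $(\va,\va_\rho)$ from \cite[Theorem 3.7]{AHK}, whereas you re-derive it from Yoneda and a projective presentation of $F$ (which also settles the naturality you were worried about).
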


\begin{proof}
Since $\CX$ is a contravariantly finite subcategory of $\mmod\La$, there exists an exact sequence $X_1 \st{d_1}\lrt X_0 \st{d_0}\lrt I \lrt 0$ of $\La$-modules such that $d_0$ and $d_1$ are right $\CX$-approximations of $I$ and $\Ker (d_0)$, respectively. This guarantees the existence of the exact sequence
\[(-,X_1) \st{(-,d_1)}\lrt (-,X_0) \st {(-,d_0)}\lrt (-,I)\vert_{\CX}\lrt 0\]
in $\mmod\CX.$ Hence $(-,I)\vert_{\CX}$ is a finitely presented functor. To show that it is injective, pick a short exact sequence $0 \rt F' \rt F \rt F'' \rt 0$ of $\CX$-modules and apply the functor $( - ,(-,I)\vert_{\CX})$ on it to get the sequence
$ 0 \lrt (F'', (-,I)\vert_{\CX}) \lrt (F,(-,I)\vert_{\CX}) \lrt (F',(-,I)\vert_{\CX}) \lrt 0.$
As an immediate consequence of \cite[Theorem 3.7]{AHK}, $(\va,\va_{\rho})$ is an adjoint pair and so we have the following commutative diagram
\[\xymatrix{0 \ar[r] & (F'', (-,I)\vert_{\CX})  \ar[r] \ar[d] & (F,(-,I)\vert_{\CX}) \ar[r] \ar[d] &(F',(-,I)\vert_{\CX})   \ar[d] \ar[r] & 0 \\ 0 \ar[r] & (\va(F''), I) \ar[r] &  (\va(F), I) \ar[r] & (\va(F'), I)  \ar[r]  & 0,}\]
where the vertical arrows are isomorphisms. But, the lower row is exact, because $I$ is an injective module and $\va$ is an exact functor by Remark \ref{Definition of Va}. Hence the upper row should be exact, that implies the result.
\end{proof}

\begin{remark}\label{HomonDorK}
Let $\La$ be a self-injective Artin algebra. So $\prj\La=\inj\La$. Hence a complex $\mathbf{P} \in \K^{\bb}(\prj\La)$ is also a bounded complex of injectives. So by the above lemma, $\K^{\bb}_{\va_{\la}}(\mathbf{P})=( - ,\mathbf{P})$ is a complex of injective $\CX$-modules. Therefore, by \cite[Corollary 10.4.7]{W}, all Hom sets with either $\mathbf{P}$ or $\K^{\bb}_{\va_{\la}}(\mathbf{P})$ in the second variants, can be calculated either in $\D^{\bb}(\mmod \Lambda)$ or in $\D^{\bb}(\mmod\CX)$, i.e. for every complex $ \mathbf{F} $ in $ \K^{\bb}(\mmod\CX) $,
$$\K^{\bb}(\mmod\CX)(\mathbf{F}, \K^{\bb}_{\va_{\la}}(\mathbf{P})) \cong \D^{\bb}(\mmod \CX)(\mathbf{F}, \K^{\bb}_{\va_{\la}}(\mathbf{P})).$$
\end{remark}

\begin{lemma}\label{HomInj=0}
Let $\La$ and $\CX$ be as in our convention. Then for every complexes $\mathbf{G} \in \K^{\bb}(\mmodd\CX)$ and $\mathbf{M} \in \K^{\bb}(\mmod\La)$,
\[\K^{\bb}(\mmod\CX)(\mathbf{G}, ( - ,\mathbf{M})\vert_{\CX})=0.\]
\end{lemma}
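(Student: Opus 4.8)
The statement to prove is that $\K^{\bb}(\mmod\CX)(\mathbf{G}, (-,\mathbf{M})\vert_{\CX}) = 0$ for every $\mathbf{G} \in \K^{\bb}(\mmodd\CX)$ and $\mathbf{M} \in \K^{\bb}(\mmod\La)$. The plan is to reduce the computation of this homotopy-category Hom group, via a standard adjunction/truncation argument, to the vanishing of finite-level $\Ext$ groups between an object of $\mmodd\CX$ and a functor of the form $(-,M)\vert_{\CX}$ with $M \in \mmod\La$, and then invoke the adjunction $(\va, \va_\rho)$ of \cite[Theorem 3.7]{AHK} together with the defining property $\va(G) = 0$ for $G \in \mmodd\CX$.

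First I would recall that $\va_\rho(M) = (-,M)\vert_{\CX}$ by Remark \ref{Definition of Va}$(ii)$, and that since $\CX$ is contravariantly finite this functor lies in $\mmod\CX$. The key input is that $(\va,\va_\rho)$ is an adjoint pair, so for any $F \in \mmod\CX$ and $M \in \mmod\La$ we have $\Hom_{\mmod\CX}(F, (-,M)\vert_{\CX}) \cong \Hom_{\mmod\La}(\va(F), M)$; in particular, for $G \in \mmodd\CX$ this Hom group vanishes because $\va(G) = 0$. To upgrade this to higher $\Ext$, I would use that $\va$ is exact (Remark \ref{Definition of Va}) and $\va_\rho$ is its right adjoint: a right adjoint of an exact functor between abelian categories preserves injectives, so $\va_\rho$ sends an injective resolution of $M$ to a complex computing $\R\Hom$, whence $\Ext^n_{\mmod\CX}(G, (-,M)\vert_{\CX}) \cong \Ext^n_{\mmod\La}(\va(G), M) = 0$ for all $n \geq 0$ whenever $G \in \mmodd\CX$. (Alternatively one can note that \cite[Lemma 5.1]{AHK}, already cited in Remark \ref{Two exact sequences}$(ii)$ for exactly this kind of vanishing, gives $\Ext^1$ vanishing directly.)

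Next I would pass from objects to complexes. Writing $\mathbf{G} = (G^i)$ with each $G^i \in \mmodd\CX$ and $(-,\mathbf{M})\vert_{\CX} = ((-,M^i)\vert_{\CX})$, both bounded, the Hom in $\K^{\bb}(\mmod\CX)$ can be computed by the usual hyper-$\Ext$ spectral sequence (or simply by dévissage on the bounded complex $\mathbf{G}$ using the brutal truncations, reducing to the case where $\mathbf{G}$ is concentrated in a single degree). In each such reduction step one is left with computing, up to a shift, groups of the form $\Ext^n_{\mmod\CX}(G^i, (-,M^j)\vert_{\CX})$ for various $i, j, n$ — all of which vanish by the previous paragraph. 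Since $\mathbf{G}$ is bounded the induction terminates, and $\K^{\bb}(\mmod\CX)(\mathbf{G}, (-,\mathbf{M})\vert_{\CX}) = 0$ follows.

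The main obstacle, and the only place requiring genuine care, is the passage from the object-level $\Ext$-vanishing to the complex-level homotopy Hom: one must be careful that morphisms in $\K^{\bb}$ are chain maps modulo homotopy and are \emph{not} in general computed by a naive product of $\Ext^0$ groups, so the truncation/dévissage (or the hyper-$\Ext$ spectral sequence, whose $E_2$-page has entries $\prod_i \Ext^q_{\mmod\CX}(G^i, (-,M^{i+p})\vert_{\CX})$) genuinely needs \emph{all} the higher $\Ext^q$ to vanish, not merely $\Ext^0$ and $\Ext^1$. This is exactly what the adjunction-plus-exactness argument in the second paragraph delivers, so the obstacle is real but already dissolved by that step; everything else is routine homological bookkeeping with bounded complexes.
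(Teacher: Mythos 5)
Your core observation --- that the adjunction $(\va,\va_{\rho})$ from \cite[Theorem 3.7]{AHK} gives $\Hom_{\mmod\CX}(G,(-,M)\vert_{\CX})\cong\Hom_{\mmod\La}(\va(G),M)=0$ whenever $G\in\mmodd\CX=\Ker(\va)$ --- is exactly the paper's argument, and it already finishes the proof. The lemma asserts the vanishing of a Hom group in the \emph{homotopy} category $\K^{\bb}(\mmod\CX)$, not in the derived category. A morphism there is represented by an honest chain map, and a chain map $\mathbf{G}\rt(-,\mathbf{M})\vert_{\CX}$ has components in $\Hom_{\mmod\CX}(G^i,(-,M^i)\vert_{\CX})$, all of which vanish by the displayed isomorphism; likewise every homotopy has components in the vanishing groups $\Hom_{\mmod\CX}(G^i,(-,M^{i-1})\vert_{\CX})$. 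Equivalently, $\K^{\bb}(\mmod\CX)(\mathbf{G},\mathbf{H})$ is $H^0$ of the total Hom complex, a subquotient of $\prod_i\Hom_{\mmod\CX}(G^i,H^i)$. So only the degree-zero vanishing is needed; your claim that the truncation argument ``genuinely needs all the higher $\Ext^q$'' is the criterion for the \emph{derived}-category Hom (that is what the hyper-$\Ext$ spectral sequence computes), which is not what is being proved.

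This matters because the step you introduce to secure the higher vanishing does not hold as stated. From the exactness of $\va$ you may conclude that $\va_{\rho}$ preserves injectives, but the isomorphism $\Ext^n_{\mmod\CX}(G,(-,M)\vert_{\CX})\cong\Ext^n_{\mmod\La}(\va(G),M)$ for $n>0$ would in addition require $\va_{\rho}(M)$ to coincide with its total right derived functor, i.e. $R^q\va_{\rho}(M)=\Ext^q_{\La}(-,M)\vert_{\CX}$ to vanish for $q>0$; this fails in general since the objects of $\CX$ need not be projective. What the adjunction actually yields is a Grothendieck-type spectral sequence $\Ext^p_{\mmod\CX}(G,R^q\va_{\rho}(M))\Rightarrow\Ext^{p+q}_{\mmod\La}(\va(G),M)$, which does not give the vanishing you assert. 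Fortunately none of this machinery is needed: delete the higher-$\Ext$ discussion and apply your first observation termwise, and you recover precisely the paper's proof.
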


\begin{proof}
Let $\mathbf{G}=(G^i,\pa^i_{\mathbf{F}})$ and $\mathbf{M}=(M^i,\pa^i_{\mathbf{M}})$. As an immediate consequence of \cite[Theorem 3.7]{AHK}, $(\va,\va_{\rho})$ is an adjoint pair and so for every $i \in \Z$, we have the isomorphism
$$\mmod\CX({G^i},( - ,{M^i})\vert_{\CX})\cong \mmod\La(\va({G^i}),{M^i}).$$
Hence $\mmod\CX({G^i},( - ,{M^i})\vert_{\CX})=0$, because ${G^i} \in \mmodd\CX=\Ker(\va)$. This fact can be extended naturally, terms by terms, to bounded complexes to prove the lemma.
\end{proof}

\begin{theorem}\label{main5}
Let $\La$ be a self-injective Artin algebra of infinite global dimension. Then the triple $(\D^{\bb}(\mmod\widetilde{\La}), \D^{\bb}_{\va}, \K^{\bb}_{\va_{\la}})$ introduced in Theorem \ref{main4}, is a { \it weakly crepant} categorical resolution of $\D^{\bb}(\mmod\La)$.
\end{theorem}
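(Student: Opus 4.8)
The plan is to establish the one remaining condition beyond what Theorem~\ref{main4} already gives, namely that $\K^{\bb}_{\va_{\la}}$ is also a \emph{right} adjoint to $\D^{\bb}_{\va}$ on $\K^{\bb}(\prj\La)$; that is, for every $\mathbf{P} \in \K^{\bb}(\prj\La)$ and every $\mathbf{F} \in \D^{\bb}(\mmod\CX)$ (with $\CX=\add M$ and $\widetilde{\La}=\End_{\La}(M)$, so $\mmod\CX\simeq\mmod\widetilde{\La}$), there is a functorial isomorphism
\[\D^{\bb}(\mmod\CX)(\mathbf{F}, \K^{\bb}_{\va_{\la}}(\mathbf{P})) \cong \D^{\bb}(\mmod\La)(\D^{\bb}_{\va}(\mathbf{F}), \mathbf{P}).\]
First I would invoke self-injectivity: since $\prj\La=\inj\La$, the complex $\mathbf{P}$ is a bounded complex of injective $\La$-modules, and by Lemma~\ref{injective object} the complex $\K^{\bb}_{\va_{\la}}(\mathbf{P})=(-,\mathbf{P})$ is a bounded complex of injective $\CX$-modules. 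By Remark~\ref{HomonDorK} this lets me compute the left-hand Hom in the homotopy category: $\D^{\bb}(\mmod\CX)(\mathbf{F}, \K^{\bb}_{\va_{\la}}(\mathbf{P})) \cong \K^{\bb}(\mmod\CX)(\mathbf{F}, \K^{\bb}_{\va_{\la}}(\mathbf{P}))$.

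Next I would feed $\mathbf{F}$ through the four-term exact sequence of Proposition~\ref{ExSeqCom},
\[0 \lrt \mathbf{F}_0 \lrt \mathbf{F} \lrt (-, \D^{\bb}_{\va}(\mathbf{F}))\vert_{\CX} \lrt \mathbf{F}_1 \lrt 0,\]
with $\mathbf{F}_0,\mathbf{F}_1 \in \C^{\bb}(\mmodd\CX)$, split it into two short exact sequences, pass to the associated triangles in $\D^{\bb}(\mmod\CX)$, and apply the cohomological functor $\D^{\bb}(\mmod\CX)(-, \K^{\bb}_{\va_{\la}}(\mathbf{P}))$. The key vanishing input is Lemma~\ref{HomInj=0}: since $\K^{\bb}_{\va_{\la}}(\mathbf{P})=(-,\mathbf{P})\vert_{\CX}$ with $\mathbf{P}\in\K^{\bb}(\mmod\La)$, we get $\K^{\bb}(\mmod\CX)(\mathbf{G}, \K^{\bb}_{\va_{\la}}(\mathbf{P}))=0$ for every $\mathbf{G}\in\K^{\bb}(\mmodd\CX)$, and — because $\K^{\bb}_{\va_{\la}}(\mathbf{P})$ is a complex of injectives — this vanishing also holds in $\D^{\bb}(\mmod\CX)$, and moreover for shifts $\mathbf{G}[n]$. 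Therefore the contributions of $\mathbf{F}_0$ and $\mathbf{F}_1$ (and their shifts) drop out of the long exact sequences, leaving
\[\D^{\bb}(\mmod\CX)(\mathbf{F}, \K^{\bb}_{\va_{\la}}(\mathbf{P})) \cong \D^{\bb}(\mmod\CX)((-, \D^{\bb}_{\va}(\mathbf{F}))\vert_{\CX}, \K^{\bb}_{\va_{\la}}(\mathbf{P})).\]

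Finally I would identify the right-hand side with $\D^{\bb}(\mmod\La)(\D^{\bb}_{\va}(\mathbf{F}), \mathbf{P})$. Working in $\K^{\bb}(\mmod\CX)$ (legitimate again by Remark~\ref{HomonDorK}, since the target is a bounded complex of injective $\CX$-modules), the source $(-,\D^{\bb}_{\va}(\mathbf{F}))\vert_{\CX}$ and target $(-,\mathbf{P})\vert_{\CX}$ are both obtained by applying the Yoneda functor $(-,-)\vert_{\CX}$ term by term to complexes over $\mmod\La$; by the Yoneda lemma applied term by term, $\mmod\CX((-,N)\vert_{\CX}, (-,N')\vert_{\CX})\cong\mmod\La(N,N')$ for $N,N'\in\mmod\La$ (here using that $\mathbf{P}$ is a complex of modules in $\CX=\add M$, so $(-,\mathbf{P})\vert_{\CX}=\CX(-,\mathbf{P})$ is genuinely representable, while $\D^{\bb}_{\va}(\mathbf{F})$ lands in $\mmod\La$ and restriction along $\CX$ is fully faithful on the image by the Auslander-formula machinery of Section~\ref{A Derived Version of Auslander's Formula}), and this extends to a homotopy-category isomorphism $\K^{\bb}(\mmod\CX)((-,\D^{\bb}_{\va}(\mathbf{F}))\vert_{\CX}, (-,\mathbf{P})\vert_{\CX})\cong\K^{\bb}(\mmod\La)(\D^{\bb}_{\va}(\mathbf{F}),\mathbf{P})$. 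One more application of \cite[Corollary 10.4.7]{W} (using that $\mathbf{P}$ is a bounded complex of injectives in $\mmod\La$) upgrades the last group to $\D^{\bb}(\mmod\La)(\D^{\bb}_{\va}(\mathbf{F}),\mathbf{P})$, which closes the chain of isomorphisms. Functoriality in both variables is inherited from each step. The main obstacle I anticipate is bookkeeping the two places where one must switch between the homotopy category and the derived category: one needs the vanishing of $\K^{\bb}_{\va_{\la}}(\mathbf{P})$-valued Homs on $\mmodd\CX$-complexes to persist after localization, and this is exactly where self-injectivity (forcing $\K^{\bb}_{\va_{\la}}(\mathbf{P})$ to be a complex of injectives, via Lemma~\ref{injective object}) is indispensable — without it the right-adjointness genuinely fails.
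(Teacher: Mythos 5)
Your proposal is correct and follows essentially the same route as the paper's own proof: reduce to the right-adjointness, use the two triangles coming from the four-term sequence of Proposition~\ref{ExSeqCom}, kill the $\mathbf{F}_0$, $\mathbf{F}_1$ terms via Lemma~\ref{HomInj=0} together with Remark~\ref{HomonDorK} (which is where self-injectivity and Lemma~\ref{injective object} enter), and finish with the term-by-term Yoneda identification. The extra detail you supply on why the final Yoneda step and the $\K$-versus-$\D$ switches are legitimate only makes explicit what the paper leaves implicit.
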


\begin{proof}
We just should show that $\K^{\bb}_{\va_{\la}}$ is a right adjoint of $\D^{\bb}_{\va}$ on $\K^{\bb}(\prj\La)$. Pick $\mathbf{F} \in \D^{\bb}(\mmod\CX)$ and $\mathbf{P} \in \K^{\bb}(\prj\La)$. As it is mentioned in {\bf Step IV}, there exist the two triangles
$\mathbf{F}_0 \lrt \mathbf{F} \lrt \mathbf{K} \rightsquigarrow$ and $  \mathbf{K} \lrt ( - , \D^{\bb}_{\va}(\mathbf{F})) \lrt \mathbf{F}_1 \rightsquigarrow,$
where $\mathbf{F}_0$ and $\mathbf{F}_1$ are objects of $\D^{\bb}(\mmodd\CX)$. By applying the functor $\D^{\bb}(\mmod\CX)( - ,\K^{\bb}_{\va_{\la}}(\mathbf{P}))$ on these triangles, we get two exact sequences of abelian groups. Now we use Remark \ref{HomonDorK} and Lemma \ref{HomInj=0}, to conclude the isomorphism
$$\K^{\bb}(\mmod\CX)(\mathbf{F}, \K^{\bb}_{\va_{\la}}(\mathbf{P})) \cong \K^{\bb}(\mmod\CX)(( - ,\D^{\bb}_{\va}(\mathbf{F})), \K^{\bb}_{\va_{\la}}(\mathbf{P})).$$
The extended version of Yoneda lemma finally helps us to establish the isomorphism
\[\K^{\bb}(\mmod\CX)(( - ,\D^{\bb}_{\va}(\mathbf{F})), \K^{\bb}_{\va_{\la}}(\mathbf{P})) \cong \K^{\bb}(\mmod\La)(\D^{\bb}_{\va}(\mathbf{F}), \mathbf{P})\]
of abelian groups. The proof is hence complete.
\end{proof}

\section*{Acknowledgments}
The authors would like to thank the referees for carefully reading the manuscript and many useful comment and hints that improved our exposition.

\end{document}